\documentclass[11pt,a4paper]{article}

\usepackage[T1]{fontenc}
\usepackage[utf8]{inputenc}
\usepackage{lmodern}
\usepackage{microtype}
\usepackage{geometry}
\usepackage{amsmath,amssymb,amsthm,mathtools}
\usepackage{booktabs,array,tabularx}
\usepackage{enumitem}
\usepackage{setspace}
\usepackage[hidelinks]{hyperref}

\hypersetup{
  pdftitle={Certified Geometry and Admissible Motion of Gerver's Sofa},
  pdfauthor={Dawid Trela},
  pdfsubject={Computer-assisted proof; moving sofa problem; certified geometry},
  pdfkeywords={moving sofa problem, Gerver's sofa, admissible motion, interval arithmetic, Krawczyk operator}
}
\setlist{nosep,leftmargin=1.8em}
\newtheorem{theorem}{Theorem}[section]
\newtheorem{lemma}[theorem]{Lemma}
\newtheorem{proposition}[theorem]{Proposition}
\newtheorem{corollary}[theorem]{Corollary}
\theoremstyle{definition}
\newtheorem{definition}[theorem]{Definition}
\theoremstyle{remark}
\newtheorem{remark}[theorem]{Remark}

\newcommand{\R}{\mathbb R}

\newcommand{\ip}[2]{\langle #1,#2\rangle}
\newcommand{\Kraw}{\mathcal K}
\newcommand{\dd}{\,\mathrm d}
\newcommand{\rev}{\mathrm{rev}}
\newcommand{\diag}{\operatorname{diag}}

\title{\textbf{Certified Geometry and Admissible Motion of Gerver's Sofa}}
\author{Dawid Trela\\
\small Doctoral School, War Studies University, Warsaw, Poland\\
\small ORCID: 0000-0001-9781-6425\\
\small Corresponding author: \href{mailto:dawidmtrela@gmail.com}{dawidmtrela@gmail.com}}
\date{}

\begin{document}
\maketitle

\begin{abstract}
The moving sofa problem asks for the largest planar set that can pass through a
unit-width right-angled hallway.  Gerver's 1992 construction remains the
largest explicit example, but its defining switching parameters, continuum of
supporting hallways, and contact topology involve transcendental equations and
global inequalities.  We give a reproducible exact-rational certification of
the construction.  A strict four-dimensional Krawczyk inclusion encloses the
reduced switching system, while an independent twenty-two-dimensional
Krawczyk calculation verifies Romik's full matching and contact equations in
an explicit rational box.  Exact interval propagation through the five
analytic phases proves the support identities, global half-plane constraints,
inner-wall separation, niche topology, oriented contacts, injectivity, and
area bounds.  From the certified support functions we identify every
supporting hallway as $L_t=x(t)+R_tL$.  For
$G=K_0\setminus\mathcal N(K_0)$ we prove directly that $G$ is nonempty, closed,
and connected and that
$\Phi_t(q)=R_{-t}(q-x(t))$ is a continuous path in $\mathrm{SE}(2)$ carrying
$G$ through the hallway, with the correct initial and final arms.  We further
identify $G$ with Romik's reconstruction of Gerver's classical eighteen-piece
sofa.  The deterministic companion verifier uses only Python's standard
library and exact rational arithmetic.  As a consequence, the
Gerver-specific geometric hypotheses in Baek's recent variational framework
are rigorously certified; we also clarify the one-sided tangent-speed
convention at two contact-curve junctions.
\end{abstract}

\noindent\textbf{Keywords:} moving sofa problem; Gerver's sofa; admissible
motion; computer-assisted proof; interval arithmetic; certified geometry.

\smallskip
\noindent\textbf{MSC 2020:} 65G20 (primary); 52A40, 65G30, 65H10
(secondary).

\section{Introduction}

The moving sofa problem asks for the supremal area of a planar set that can be
moved around a right-angled corner in a hallway of unit width.  The problem is
simple to state but combines global geometry, topology, and a continuum of
collision constraints.

Gerver constructed in 1992 the largest currently known explicit sofa and a
piecewise analytic rotation path for it \cite{Gerver}.  His construction has
area approximately $2.2195$ and a boundary assembled from finitely many
analytic arcs and line segments.  It remains the central concrete object in
the subject.

Romik later derived differential equations for balanced rotation paths and
reconstructed Gerver's solution as a five-phase analytic path with explicit
matching and contact equations \cite{Romik}.  This formulation isolates two
transcendental switching angles and makes the geometry amenable to systematic
verification.

More recently, Baek developed a convex-geometric and variational framework for
the optimality of Gerver's sofa \cite{Baek}.  That framework uses detailed
Gerver-specific assertions about the associated cap, niche, contacts, tangent
directions, and injectivity.  Those assertions provide an important downstream
application of the certification below, but they are not used as a substitute
for proving that the certified set actually moves through the hallway.

The rigorous difficulty is twofold.  First, numerical root-finder output does
not prove the existence and uniqueness of the required exact parameter vector.
Second, a plot of the moving shape does not certify all hallway constraints on
a continuum of angles, the topology of the removed niche, or the formal
closedness and connectedness conditions in the definition of a moving sofa.
The smallest visible geometric margin is only of order $10^{-3}$, so each
logical and numerical step must be made explicit.

This paper gives an exact-rational certificate for the Gerver construction and
an explicit admissible rigid motion.  The numerical layer is deterministic and
uses no binary floating-point arithmetic.  The ordinary geometric layer
identifies the supporting hallway at every angle and derives the motion by an
inverse rigid transformation.  In particular, admissibility is proved by a
transparent set-theoretic argument from the cap and niche definitions, without
introducing an additional interval grid.

The principal contributions are as follows.
\begin{enumerate}[label=\textup{(\roman*)}]
\item a strict four-dimensional Krawczyk enclosure for the reduced switching
      system;
\item an independent twenty-two-dimensional Krawczyk certificate for Romik's
      full matching and contact system;
\item exact-rational certification of the global support inequalities,
      critical inner-wall separation, niche topology, contacts, injectivity,
      and area threshold;
\item the explicit supporting-hallway identity
      $L_t=x(t)+R_tL$ for every $0\le t\le\pi/2$;
\item a direct proof that the fixed set
      $G=K_0\setminus\mathcal N(K_0)$ is nonempty, closed, connected, and
      contained in every supporting hallway;
\item the continuous rigid-motion path
      $\Phi_t(q)=R_{-t}(q-x(t))$, including the formal initial and final arm
      conditions;
\item identification of the fixed set with Romik's reconstruction of
      Gerver's classical eighteen-piece sofa;
\item a deterministic standard-library verifier and explicit certificate
      data; and
\item, as a corollary, certification of the Gerver-specific geometric input to
      Baek's variational framework, together with a precise treatment of the
      one-sided tangent speeds at the two relevant junctions.
\end{enumerate}

The proof is organized so that the logical hierarchy is visible.  Sections
2--3 define the hallway and certify the five-phase parameter vector.  Section
4 establishes the global geometry.  Section~5 proves admissible motion.
Section~6 identifies the resulting set with Gerver's classical construction.
The remaining sections record additional geometry, the relation to Baek's
framework, and reproducibility.

\section{The hallway, moving sofas, and Gerver's construction}

\subsection{The standard hallway and rigid motions}

Let
\[
H_L=(-\infty,1]\times[0,1],\qquad
V_L=[0,1]\times(-\infty,1],
\]
and let the standard unit L-shaped hallway be
\[
L=H_L\cup V_L.
\]
Equivalently, if
\[
Q_L^+=(-\infty,1]^2,
\qquad
Q_L^-=(-\infty,0)^2,
\]
then
\begin{equation}\label{eq:hallway-quarterplanes}
L=Q_L^+\setminus Q_L^-.
\end{equation}
The inner quadrant is open: contact with either inner wall is allowed.

\begin{definition}[Moving sofa]\label{def:moving-sofa}
A nonempty, closed, connected set $S\subset\R^2$ is a moving sofa if there is
a continuous path $\Psi:[0,1]\to\mathrm{SE}(2)$ of orientation-preserving
rigid motions such that $\Psi_0$ is a translation,
\[
\Psi_0(S)\subset H_L,
\qquad
\Psi_s(S)\subset L\quad(0\le s\le1),
\qquad
\Psi_1(S)\subset V_L.
\]
A linear reparametrization permits any compact time interval in place of
$[0,1]$.
\end{definition}

For a nonempty compact set $K$, write
\[
h_K(t)=\max_{q\in K}\ip q{u_t}.
\]
Following the supporting-hallway convention of \cite{Baek}, define
\begin{equation}\label{eq:supporting-map}
f_{K,t}(p)=R_tp+(h_K(t)-1)u_t+(h_K(t+T)-1)v_t
\end{equation}
and
\[
L_K(t)=f_{K,t}(L),
\qquad
Q_K^\pm(t)=f_{K,t}(Q_L^\pm).
\]
Thus $L_K(t)=Q_K^+(t)\setminus Q_K^-(t)$.

For rotation angle $T=\pi/2$, the cap fan is the upper half-plane
\[
F_T=\{q:q_2\ge0\}.
\]
The niche associated with a cap $K$ is
\begin{equation}\label{eq:general-niche}
\mathcal N(K)=F_T\cap\bigcup_{0<t<T}Q_K^-(t).
\end{equation}
These definitions are used below only after the supporting map and all wall
orientations have been made explicit.
\subsection{Gerver\'s five-phase rotation path}

Put $T=\pi/2$ and
\[
u_t=(\cos t,\sin t),\qquad v_t=(-\sin t,\cos t),
\qquad R_t=(u_t\;v_t).
\]
For a $C^1$ rotation path $x:[0,T]\to\R^2$, write
\[
\alpha(t)=\ip{x'(t)}{u_t},\qquad
\beta(t)=\ip{x'(t)}{v_t}.
\]
The four standard contact curves are
\begin{equation}\label{eq:contacts}
\begin{aligned}
A(t)&=x(t)+\alpha(t)v_t+u_t,
& B(t)&=x(t)+\alpha(t)v_t,\\
C(t)&=x(t)-\beta(t)u_t+v_t,
& D(t)&=x(t)-\beta(t)u_t.
\end{aligned}
\end{equation}

Let $0<\phi<\theta<T/2$ and define
\[
\eta=T-\theta,\qquad \tau=T-\phi.
\]
Following Romik \cite{Romik}, the Gerver path is
\begin{equation}\label{eq:path}
 x(t)=R_t z_j(t)+\kappa_j\qquad(t\in I_j),
\end{equation}
where
\[
\begin{array}{c|c|c}
 j&I_j&z_j(t)\\ \midrule
1&[0,\phi]&
(a_1\cos t+a_2\sin t-1,\;-a_2\cos t+a_1\sin t-\tfrac12)\\
2&[\phi,\theta]&
(-\tfrac14t^2+b_1t+b_2,\;\tfrac12t-b_1-1)\\
3&[\theta,\eta]&(c_1-t,\;c_2+t)\\
4&[\eta,\tau]&
(-\tfrac12t+d_1-1,\;-\tfrac14t^2+d_1t+d_2)\\
5&[\tau,T]&
(e_1\cos t+e_2\sin t-\tfrac12,\;-e_2\cos t+e_1\sin t-1).
\end{array}
\]
Here $\kappa_j=(\kappa_{j,1},\kappa_{j,2})$.  The constants are fixed by
symmetry, normalization, $C^1$ matching, and the two contact conditions.  The
next section certifies the unique relevant solution rather than defining it by
a decimal output of a root finder.

\section{Exact certification of the Gerver parameters}

\subsection{The reduced four-dimensional system}

Set
\[
 a=-\alpha(\phi),\qquad b=\beta(\phi).
\]
The elimination described by Romik \cite{Romik} reduces the nonlinear part of the system to
$F(a,b,\phi,\theta)=0$, where
\begin{align}
F_1={}&a(\cos\theta-\cos\phi)-2b\sin\phi
 +(\theta-\phi-1)\cos\theta-\sin\theta+\cos\phi+\sin\phi,
 \label{eq:F1}\\
F_2={}&a(3\sin\theta+\sin\phi)-2b\cos\phi
 +3(\theta-\phi-1)\sin\theta+3\cos\theta-\sin\phi+\cos\phi,
 \label{eq:F2}\\
F_3={}&a\cos\phi-\sin\phi-\tfrac12+\tfrac12\cos\phi-b\sin\phi,
 \label{eq:F3}\\
F_4={}&a+\tfrac\pi2-\phi-\theta-b
 +\tfrac12(\theta-\phi)(1+a)+\tfrac14(\theta-\phi)^2.
 \label{eq:F4}
\end{align}

\begin{theorem}[Four-dimensional Krawczyk certificate]\label{thm:kraw4}
Let
\begin{align*}
X_a&=[0.09442656084365,\;0.09442656084366],\\
X_b&=[1.39920372733354,\;1.39920372733356],\\
X_\phi&=[0.03917736479008,\;0.03917736479009],\\
X_\theta&=[0.68130150938272,\;0.68130150938273],
\end{align*}
and let $X=X_a\times X_b\times X_\phi\times X_\theta$, with every decimal
endpoint interpreted as an exact rational number.  Then
\eqref{eq:F1}--\eqref{eq:F4} have a unique zero in $X$.  For the exact rational
preconditioner $C_4$ in Appendix~\ref{app:kraw4},
\[
\Kraw_4(X)=x_0-C_4F(x_0)+(I-C_4F'(X))(X-x_0)
\subset\operatorname{int}X,
\]
where $x_0=\operatorname{mid}X$.
\end{theorem}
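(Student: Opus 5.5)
The plan is the standard Krawczyk existence–uniqueness argument, carried out with exact rational interval arithmetic. First I record the abstract principle. Let $F:\R^4\to\R^4$ be $C^1$ on a neighbourhood of the box $X$, and suppose $F'(X)$ is an interval matrix containing $F'(\xi)$ for every $\xi\in X$. Let $C_4$ be any real matrix and $x_0\in X$. If $\Kraw_4(X)\subset\operatorname{int}X$, then $F$ has exactly one zero in $X$, and that zero lies in $\Kraw_4(X)$.

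For existence I will use the map $g(x)=x-C_4F(x)$. The mean value theorem, applied componentwise, gives $g(x)\in x_0-C_4F(x_0)+(I-C_4F'(X))(x-x_0)\subset\Kraw_4(X)$ for $x\in X$. So $g$ maps $X$ into itself, and Brouwer gives a fixed point. A fixed point of $g$ is a zero of $F$ provided $C_4$ is nonsingular.

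Nonsingularity of $C_4$, and uniqueness, both follow from strict inclusion. Strict inclusion forces $\|I-C_4M\|<1$ for every $M\in F'(X)$ in a suitable scaled max-norm; this is the standard Rump/Neumaier argument. Hence $C_4M$ is invertible, and therefore $C_4$ and every $M$ are nonsingular. For uniqueness, suppose $y,z$ are two zeros in $X$. The componentwise mean value theorem gives $0=F(y)-F(z)=M(y-z)$ with $M\in F'(X)$, so $y=z$.

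Second, I make the rigorous computation concrete. The endpoints of $X$ and the entries of $C_4$ from Appendix~\ref{app:kraw4} are taken as exact rationals, and $x_0=\operatorname{mid}X$ is exact. The transcendental ingredients $\sin$ and $\cos$ at $\phi,\theta$ and on the intervals $X_\phi,X_\theta$ are handled as follows.
\begin{itemize}
\item I enclose $\pi/2$ by rational bounds, for example from a Machin-type series with an explicit tail bound.
\item I evaluate $\sin$ and $\cos$ by Taylor polynomials with Lagrange remainder bounded by $|x|^{n+1}/(n+1)!$. This is valid because all arguments lie in $[0,1]$.
\item On the thin intervals I use monotonicity, or simply the enclosure $f(m)+f'([\,\cdot\,])\,(X-m)$.
\end{itemize}
With these enclosures I compute a rational interval containing $F(x_0)$ directly from \eqref{eq:F1}--\eqref{eq:F4}.

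Next I differentiate \eqref{eq:F1}--\eqref{eq:F4} symbolically and bound every entry over $X$ by natural interval extension. For example, $\partial F_1/\partial\theta=-a\sin\theta-(\theta-\phi-1)\sin\theta+\cos\theta-\cos\theta$ simplifies to $-(a+\theta-\phi-1)\sin\theta$. The entries with respect to $a$ and $b$ are explicit trigonometric values, and $F_4$ is polynomial. Then I form $\Kraw_4(X)$ using exact interval operations with outward rounding to rationals, and check the eight strict endpoint inequalities.

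The main obstacle is the widths. The box has width about $10^{-14}$, so the enclosures of $F(x_0)$ must be tighter than roughly $10^{-15}$ after multiplication by $C_4$. The contraction term $(I-C_4F'(X))(X-x_0)$ is quadratically small only if $C_4$ is a very accurate rational approximation of $F'(x_0)^{-1}$.

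I would therefore first choose Taylor degrees and the $\pi$ enclosure generously, aiming for remainders below $10^{-25}$, and use centred (mean-value) forms for $F'(X)$. With these choices the excess of the interval derivative over $F'(x_0)$ is $O(10^{-14})$. If the inclusion margin is still insufficient in the $b$ coordinate, whose stated width is $2\times10^{-14}$, I would recompute $C_4$ as a higher-precision rational inverse of the midpoint Jacobian. Failing that, I would tighten $x_0$ toward the numerical root, since the theorem only requires $x_0\in X$, with the midpoint as the stated choice.
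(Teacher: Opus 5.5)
Your proposal is correct and follows the paper's route: the Krawczyk existence--uniqueness theorem, applied with the appendix preconditioner $C_4$ and $x_0=\operatorname{mid}X$, with all $\sin$, $\cos$ and $\pi$ values enclosed by series bounds in exact rational arithmetic (Machin's formula for $\pi$). The only difference is cosmetic: the paper also checks $\det C_4\neq0$ exactly, whereas you derive nonsingularity from the strict inclusion via the scaled max-norm argument. One caveat: your fallback of recomputing $C_4$ or moving $x_0$ misreads the statement, which fixes both, so it would give existence and uniqueness but not the displayed inclusion; this is moot because the printed image intervals of $\Kraw_4(X)$, of width about $10^{-27}$, already lie strictly inside $X$.
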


\begin{proof}
All interval endpoints are rational.  The value of $\pi$ is enclosed by
Machin's identity
\[
 \pi=16\arctan(1/5)-4\arctan(1/239)
\]
using consecutive alternating partial sums.  Sine and cosine are reduced to
$[0,0.9]$ when necessary and enclosed by consecutive alternating Taylor sums;
the resulting exact intervals are rounded outwards to fixed decimal rationals.
The actual value of $\pi$ lies in the interval used by the calculation, so the
inclusion holds for the actual system.  The exact determinant of $C_4$ is
nonzero, and the four output intervals printed in
Appendix~\ref{app:kraw4} lie strictly inside $X$.  The Krawczyk theorem \cite{Krawczyk,MooreKearfottCloud,Rump} gives
existence and uniqueness in $X$.
\end{proof}

The two angle coordinates of the preceding image are contained in the simpler
outward-rounded boxes
\begin{align}
\Phi={}&[0.03917736479008364186321787498,
         0.03917736479008364186321787550],\label{eq:Phi}\\
\Theta={}&[0.68130150938272489447385575454,
           0.68130150938272489447385575966].\label{eq:Theta}
\end{align}

\subsection{Direct certification of Romik's independent system}

Let $y$ denote the twenty parameters
\[
(\kappa_{1,1},\kappa_{1,2},\ldots,\kappa_{5,1},\kappa_{5,2},
 a_1,a_2,b_1,b_2,c_1,c_2,d_1,d_2,e_1,e_2).
\]
The direct system used by the verifier consists of the following eight scalar
relations:
\begin{align}
e_1-a_1&=0,& e_2+a_2&=0,\nonumber\\
d_1+b_1-\tfrac\pi4&=0,&
d_2-b_2-\tfrac\pi4(2b_1-\tfrac\pi4)&=0,\nonumber\\
c_2-c_1+\tfrac\pi2&=0,&
\kappa_{1,1}-1+a_1&=0,\nonumber\\
\kappa_{1,2}-\tfrac14&=0,& a_2+\tfrac14&=0,
\label{eq:linear8}
\end{align}
together with the six vector equations
\begin{align}
x_1(\phi)&=x_2(\phi),&x_1'(\phi)&=x_2'(\phi),\nonumber\\
x_2(\theta)&=x_3(\theta),&x_2'(\theta)&=x_3'(\theta),\nonumber\\
x_3(T-\theta)&=x_4(T-\theta),&
x_4(T-\phi)&=x_5(T-\phi),
\label{eq:matches}
\end{align}
and the vector contact equation
\begin{equation}\label{eq:contact43}
 x_1(\phi)=B(T-\theta).
\end{equation}
These are precisely the $22$ independent scalar equations selected from
Romik's equations \cite{Romik} (27)--(39), (41), and (43).  The omitted vector equations
(40), (42), and (44) are the six scalar relations identified there as redundant
under symmetry.

\begin{lemma}[Direct full-system certificate]\label{lem:kraw22}
Let $Y$ be the rational box in Appendix~\ref{app:Y}.  The system
\eqref{eq:linear8}--\eqref{eq:contact43} has a unique zero in
\[
 Z=Y\times\Phi\times\Theta.
\]
More precisely, the verifier evaluates a $22$-dimensional Krawczyk operator
with a frozen exact-rational preconditioner $C_{22}$ and obtains
\[
 \Kraw_{22}(Z)\subset\operatorname{int}Z,
 \qquad
 \max_i\frac{\operatorname{rad}(\Kraw_{22}(Z)_i)}
                  {\operatorname{rad}(Z_i)}<0.0095.
\]
\end{lemma}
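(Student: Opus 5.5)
The proof will use the same interval-Newton framework as Theorem~\ref{thm:kraw4}, now applied to the $22$-dimensional map $G:Z\to\R^{22}$. Its components are the eight linear relations \eqref{eq:linear8}, the twelve scalar components of the matching equations \eqref{eq:matches}, and the two components of the contact equation \eqref{eq:contact43}. The unknowns are the twenty entries of $y$ together with $\phi$ and $\theta$.

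First, every component of $G$ and of its Jacobian $G'$ will be written explicitly in terms of $\cos t$, $\sin t$, polynomials in $t$, and $\pi$. The evaluation points are $t\in\{\phi,\theta,T-\theta,T-\phi\}$. For \eqref{eq:contact43} we need $B(T-\theta)=x_4(T-\theta)+\alpha(T-\theta)v_{T-\theta}$. Here $T-\theta=\eta$ is the left endpoint of $I_4$, so $\alpha$ is computed from the phase-$4$ formula $x_4'=R_t'z_4+R_tz_4'$. That choice is legitimate because $x$ is $C^1$ at $\eta$ by the $x_3'=x_4'$ relation implied by symmetry. Strictly, to make the system well defined on all of $Z$, we fix it as the phase-$4$ expression. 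Each equation is affine in the twenty coefficient parameters, with coefficients depending on $(\phi,\theta)$. The partial derivatives with respect to $y$ are therefore trigonometric or polynomial expressions in $\phi,\theta$ alone, and the derivatives with respect to $\phi,\theta$ are obtained by differentiating once more. All of these quantities will be enclosed over $Z$ using the rational enclosures of $\pi$, $\sin$ and $\cos$ described in the proof of Theorem~\ref{thm:kraw4}.

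Next, take $y_0=\operatorname{mid}Z$. Let $C_{22}$ be a rational rounding of the inverse of the floating-point Jacobian at $y_0$; this is frozen and recorded. Then evaluate
\[
\Kraw_{22}(Z)=y_0-C_{22}G(y_0)+(I-C_{22}G'(Z))(Z-y_0)
\]
in exact rational interval arithmetic. Two facts are then checked: componentwise inclusion in $\operatorname{int}Z$, and the ratio bound $\max_i\operatorname{rad}(\Kraw_{22}(Z)_i)/\operatorname{rad}(Z_i)<0.0095$. The Krawczyk theorem then gives existence of a zero in $Z$. Uniqueness follows from the strict inclusion, since it forces $C_{22}$ to be nonsingular and every element of $I-C_{22}G'(Z)$ to be contractive in the appropriate scaled norm. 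The ratio bound is additional quantitative information about this contraction, not an extra hypothesis.

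A consistency remark ties the result to Theorem~\ref{thm:kraw4}. The $(\phi,\theta)$ components of $Z$ are exactly $\Phi\times\Theta$, which contain the angle coordinates of the four-dimensional zero. Uniqueness in $Z$ therefore shows that the full parameter vector certified here is the one attached to the reduced solution. This requires no extra argument beyond noting that the reduced system is a consequence of the full one.

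The main obstacle is conditioning and overestimation. The $\Phi$ and $\Theta$ factors are extremely thin, of width about $10^{-27}$, while $Y$ must be wide enough to contain the true coefficients. The large dependence of $G'$ on $\phi,\theta$ through products with the wide $y$-components could then inflate $(I-C_{22}G'(Z))(Z-y_0)$. I would control this in three ways. First, use the affine structure so that terms in $y$ are evaluated exactly at rational centers. Second, scale the rows of $Y$ to widths matching the sensitivity of each parameter. Third, verify the inclusion with sufficiently many Taylor terms that the trigonometric enclosures are much narrower than $\operatorname{rad}(\Phi)$. If the inclusion still fails, the fallback is to tighten $Y$ around a high-precision Newton iterate before freezing $C_{22}$.
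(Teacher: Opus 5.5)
Your proposal is correct and follows the paper's proof: a Krawczyk test on $Z$ with a frozen rational preconditioner, exact rational interval arithmetic using rigorous enclosures of $\pi$, $\sin$ and $\cos$, and existence and uniqueness from strict interior inclusion. Your observation that midpoint-centred strict inclusion already forces $C_{22}$ and every matrix in the interval Jacobian over $Z$ to be nonsingular makes the paper's separate determinant check redundant but harmless.

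The consistency paragraph is not needed for the lemma, and as stated it is too quick. The implication full $\Rightarrow$ reduced only produces some reduced zero whose angles lie in $\Phi\times\Theta$. Identifying it with the four-dimensional zero also requires an interval check that $(-\alpha(\phi),\beta(\phi))\in X_a\times X_b$. It further relies on the correctness of the symbolic elimination, which the direct certificate is designed to bypass.
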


\begin{proof}
The companion program implements the displayed equations directly, forms their
interval Jacobian on $Z$, and evaluates the Krawczyk operator with exact
\texttt{Fraction} arithmetic.  The $484$ entries of $C_{22}$ are embedded as
exact decimal rationals in the program; its determinant is checked to be
nonzero before the inclusion is accepted.  The maximum certified radius ratio
is at most $0.009409749092$.  Since the interval extension also encloses the
actual value of $\pi$, the Krawczyk theorem proves existence and uniqueness for
the actual system in the stated box.
\end{proof}

This direct calculation removes dependence on the correctness of the symbolic
elimination leading to \eqref{eq:F1}--\eqref{eq:F4}: the reduced certificate
supplies very small angle boxes, while the decisive parameter and contact test
is performed on Romik's independent equations themselves.

At the exact zero, \eqref{eq:linear8} gives
\[
a_2=-\tfrac14,\qquad e_2=\tfrac14,\qquad e_1=a_1,
\qquad \kappa_{1,2}=\tfrac14,
\qquad \kappa_{1,1}=1-a_1.
\]
Substitution of the symmetry relations into the five phase formulas gives
\begin{equation}\label{eq:derivativesymmetry}
 x'(T-t)=\diag(1,-1)x'(t).
\end{equation}
Because $x(0)=(0,0)$, integration of the second coordinate in
\eqref{eq:derivativesymmetry} gives $x_2(T)=0$.  The phase-five endpoint formula
is $x_2(T)=\kappa_{5,2}-1/4$, and therefore
\begin{equation}\label{eq:k52}
 \kappa_{5,2}=\tfrac14.
\end{equation}
The redundant derivative matches follow from the certified equations and the
same symmetry, so all five pieces of $x$ join $C^1$.  Integrating
\eqref{eq:derivativesymmetry} also yields a constant $\ell$ such that
\begin{equation}\label{eq:pathsymmetry}
 x(T-t)=(\ell-x_1(t),x_2(t)),
 \qquad \ell=x_1(T)=1-a_1+\kappa_{5,1}.
\end{equation}

\section{Certified global geometry}

\subsection{The cap determined by the path}

Define
\begin{equation}\label{eq:K0}
K_0=\{q:q_2\ge0\}\cap
\bigcap_{s\in[0,T]}
\left\{
\begin{array}{l}
q:\ip q{u_s}\le \ip{x(s)}{u_s}+1,\\[-1mm]
q:\ip q{v_s}\le \ip{x(s)}{v_s}+1.
\end{array}\right.
\end{equation}

\begin{lemma}[Oriented support curves]\label{lem:support}
On the interiors of the five phases,
\begin{equation}\label{eq:lambdas}
 A'=\lambda_Av_t,\qquad C'=\lambda_Cu_t,
\end{equation}
where
\[
\begin{array}{c|cc}
I_j&\lambda_A&\lambda_C\\ \midrule
I_1&0&-\tfrac12\\
I_2&\beta&b_1-\tfrac12t\\
I_3&\beta&\alpha\\
I_4&d_1-\tfrac12t&\alpha\\
I_5&\tfrac12&0.
\end{array}
\]
Thus $\lambda_A\ge0$ and $\lambda_C\le0$.  The curves $A$ and $C$ lie in
$K_0$ and, for every $s\in[0,T]$, attain the two rotating support values
\[
\ip{A(s)}{u_s}=\ip{x(s)}{u_s}+1,
\qquad
\ip{C(s)}{v_s}=\ip{x(s)}{v_s}+1.
\]
\end{lemma}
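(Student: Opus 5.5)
The plan has two parts. First I derive the derivative formulas \eqref{eq:lambdas} from a phase-independent identity and read off the signs. Then I prove the support identities and membership in $K_0$ by an envelope argument; one cross-inequality in that step is not covered by the envelope argument and will need an interval computation.

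\emph{Derivative formulas.} Since $u_t'=v_t$ and $v_t'=-u_t$, differentiating \eqref{eq:contacts} and using $x'=\alpha u_t+\beta v_t$ gives
\[
A'=\alpha u_t+\beta v_t+\alpha' v_t-\alpha u_t+v_t=(\beta+\alpha'+1)\,v_t,
\qquad
C'=(\alpha-\beta'-1)\,u_t .
\]
So \eqref{eq:lambdas} holds on each open phase with $\lambda_A=\beta+\alpha'+1$ and $\lambda_C=\alpha-\beta'-1$. To get the table, write $x=R_tz_j+\kappa_j$. Then $x'=R_t(z_j'+Jz_j)$ with $J(z_1,z_2)=(-z_2,z_1)$, hence
\[
\alpha=z_{j,1}'-z_{j,2},\qquad \beta=z_{j,2}'+z_{j,1}.
\]
Substituting the five formulas for $z_j$ is routine. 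For example, on $I_3$ one has $\alpha'=-1$, which gives $\lambda_A=\beta$. On $I_1$ and $I_5$ the trigonometric terms cancel, using $a_2=-\tfrac14$ and $e_2=\tfrac14$ from \eqref{eq:linear8}, and leave the constants $0,-\tfrac12$ and $\tfrac12,0$. On $I_2$ and $I_4$ the quadratic terms produce the affine entries.

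\emph{Sign conditions.} The signs $\lambda_A\ge0$ and $\lambda_C\le0$ reduce to finitely many one-variable inequalities:
\begin{itemize}
\item $\beta\ge0$ on $I_2\cup I_3$;
\item $d_1-\tfrac t2\ge0$ on $I_4$;
\item $b_1-\tfrac t2\le0$ on $I_2$;
\item $\alpha\le0$ on $I_3\cup I_4$.
\end{itemize}
The affine ones follow at once from the endpoint values, using the boxes of Lemma~\ref{lem:kraw22} and \eqref{eq:Phi}--\eqref{eq:Theta}. For $\alpha$ and $\beta$ I would use the explicit polynomial forms and interval-evaluate them on each phase. Alternatively, the symmetry \eqref{eq:derivativesymmetry} reduces the statement about $\alpha$ on $I_3\cup I_4$ to one about $\beta$ on $I_2\cup I_3$.

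\emph{Support values.} These are immediate: $\ip{A(s)}{u_s}=\ip{x(s)}{u_s}+\alpha(s)\ip{v_s}{u_s}+1=\ip{x(s)}{u_s}+1$, and the same computation works for $C$.

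\emph{Membership in $K_0$: own family of half-planes.} Fix $s$ and set $g(r)=\ip{x(r)}{u_r}+1-\ip{A(s)}{u_r}$. Then $g(s)=0$, and since $\ip{x(r)-A(r)}{v_r}=-\alpha(r)$,
\[
g'(r)=\alpha(r)+\ip{x(r)-A(s)}{v_r}=\ip{A(r)-A(s)}{v_r}
=\int_s^r\lambda_A(\sigma)\cos(r-\sigma)\dd\sigma .
\]
Here $|r-\sigma|\le T=\pi/2$, so $\cos(r-\sigma)\ge0$. Together with $\lambda_A\ge0$, this shows that $g'$ has the sign of $r-s$, so $g\ge g(s)=0$ on $[0,T]$. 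Continuity of $x$ and $x'$ at the junctions makes $A$ continuous, so the integral representation is valid across phases. The argument for $C$ with the $v$-constraints is identical, using $\lambda_C\le0$ and $\ip{C(r)-C(s)}{u_r}$.

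\emph{Main obstacle: the cross-inequalities.} The envelope argument does not cover $\ip{A(s)}{v_r}\le\ip{x(r)}{v_r}+1$ for all $r,s$, nor its analogue for $C$ against the $u$-constraints, nor the conditions $A_2\ge0$ and $C_2\ge0$. I would first reduce these using $g$-type monotonicity in $r$: for fixed $s$, the function $r\mapsto\ip{x(r)}{v_r}+1-\ip{A(s)}{v_r}$ has derivative $\ip{C(r)-A(s)}{-u_r}$, so its minimum is located by where the two curves cross. What remains I would certify by exact-interval subdivision of $[0,T]^2$, as in the global half-plane checks announced in Section~4, and by the reflection symmetry \eqref{eq:pathsymmetry}, which exchanges the roles of $A$ and $C$. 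This is where the small $10^{-3}$ margins bite, so I expect this step to be the hardest part of the proof.
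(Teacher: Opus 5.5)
Your derivative formulas $\lambda_A=\beta+\alpha'+1$ and $\lambda_C=\alpha-\beta'-1$ are correct, as are the phase table, the sign reductions, the support identities and your own-family envelope argument. The last is the paper's argument with the roles of point and half-plane exchanged; the paper simply uses $\frac{\mathrm d}{\mathrm dt}\ip{A(t)}{u_s}=\lambda_A(t)\sin(s-t)$.

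The gap is the step you call the main obstacle. You do not prove the cross-inequalities $\ip{A(t)}{v_s}\le\ip{x(s)}{v_s}+1$ and $\ip{C(t)}{u_s}\le\ip{x(s)}{u_s}+1$, nor the base conditions $A_2,C_2\ge0$. The exact-interval subdivision you defer to cannot close them as described, because they hold with \emph{equality} on parts of the domain:
\begin{itemize}
\item $A(T)=(a_1+\kappa_{5,1},1)$ lies on the $v_0$-line $q_2=1$, so the $A$-versus-$v$ inequality is an equality at $(s,t)=(0,T)$.
\item $C(0)=(1-2a_1,1)$ lies on the $u_T$-line $q_2=1$, so the $C$-versus-$u$ inequality is an equality at $(s,t)=(T,0)$.
\item Since $\lambda_A\equiv0$ on $I_1$ and $\lambda_C\equiv0$ on $I_5$, one has $A\equiv(1,0)$ on $I_1$ and $C\equiv C(T)=(\kappa_{5,1}-a_1,0)$ on $I_5$. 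So $A_2\ge0$ and $C_2\ge0$ are tight on whole intervals.
\end{itemize}
The margin vanishes at these points, so no subdivision can certify the cells containing them by a positive interval lower bound; an analytic argument is needed there anyway. The $10^{-3}$ margin you anticipate is the inner-wall separation $h(T/2)$ of Lemma~\ref{lem:critical}; it plays no role in this lemma.

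The missing idea is a short monotonicity argument through the top segment. For $s,t\in[0,T]$ one has $\frac{\mathrm d}{\mathrm dt}\ip{A(t)}{v_s}=\lambda_A(t)\cos(t-s)\ge0$, so $\ip{A(t)}{v_s}\le\ip{A(T)}{v_s}$. The endpoint values $C(0)=(1-2a_1,1)$ and $A(T)=(a_1+\kappa_{5,1},1)$ show $A(T)=C(0)+Lu_0$ with $L=3a_1+\kappa_{5,1}-1>0$. A side remark: the cancellations on $I_1$ and $I_5$ are identities and do not need $a_2=-\tfrac14$, $e_2=\tfrac14$. Those two values are what give $\alpha(0)=\beta(T)=0$, and hence $A(0)=(1,0)$ and the formula for $C(T)$. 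Therefore
\[
\ip{A(t)}{v_s}\le\ip{C(0)}{v_s}-L\sin s\le\ip{C(0)}{v_s}\le\ip{C(s)}{v_s}=\ip{x(s)}{v_s}+1,
\]
where the last inequality is the envelope property of $C$ that you already proved. Symmetrically, $\frac{\mathrm d}{\mathrm dt}\ip{C(t)}{u_s}=\lambda_C(t)\cos(t-s)\le0$ gives
\[
\ip{C(t)}{u_s}\le\ip{C(0)}{u_s}=\ip{A(T)}{u_s}-L\cos s\le\ip{A(s)}{u_s}.
\]
The base constraint follows from $A_2'=\lambda_A\cos t\ge0$ with $A_2(0)=0$, and from $C_2'=\lambda_C\sin t\le0$ with $C_2(T)=0$. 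Beyond the sign table, the only numerical input is $L>0$, and no two-dimensional computation is needed.
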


\begin{proof}
Differentiate \eqref{eq:contacts}, using $u_t'=v_t$ and $v_t'=-u_t$.
Direct substitution of the phase formulas gives \eqref{eq:lambdas} and the
displayed table.  Its signs follow uniformly from the certified parameter
boxes.

For fixed $s$,
\[
\frac{\dd}{\dd t}\ip{A(t)}{u_s}=\lambda_A(t)\sin(s-t),
\]
so $A(s)$ maximizes $\ip{A(t)}{u_s}$.  Similarly,
\[
\frac{\dd}{\dd t}\ip{C(t)}{v_s}=\lambda_C(t)\sin(t-s),
\]
so $C(s)$ maximizes $\ip{C(t)}{v_s}$.  The endpoint relations are
\begin{align*}
A(0)&=(1,0),& C(0)&=(1-2a_1,1),\\
A(T)&=(a_1+\kappa_{5,1},1),&
C(T)&=(\kappa_{5,1}-a_1,0).
\end{align*}
Consequently
\begin{equation}\label{eq:L}
 A(T)=C(0)+Lu_0,
 \qquad L=3a_1+\kappa_{5,1}-1>1.6137.
\end{equation}
For the cross-support inequalities,
$A'\!\cdot v_s=\lambda_A\cos(t-s)\ge0$, whence
\[
 A(t)\cdot v_s\le A(T)\cdot v_s
 =C(0)\cdot v_s-L\sin s
 \le C(0)\cdot v_s\le C(s)\cdot v_s.
\]
Likewise $C'\!\cdot u_s=\lambda_C\cos(t-s)\le0$, and therefore
\[
 C(t)\cdot u_s\le C(0)\cdot u_s
 =A(T)\cdot u_s-L\cos s
 \le A(T)\cdot u_s\le A(s)\cdot u_s.
\]
Finally, $A_2' =\lambda_A\cos t\ge0$ and
$C_2'=\lambda_C\sin t\le0$; the displayed endpoints show that both curves
satisfy the base constraint $q_2\ge0$.  Hence $A,C\subset K_0$ and attain the
claimed supports.
\end{proof}

The only nonlocal condition still needed to reconstruct the path from the cap
is that every path point satisfy every supporting half-plane.

\begin{lemma}[Global half-plane certificate]\label{lem:grid}
For all $s,t\in[0,T]$,
\begin{align}
G_u(s,t)&:=1+\ip{x(s)-x(t)}{u_s}>0.1710,
\label{eq:Gu}\\
G_v(s,t)&:=1+\ip{x(s)-x(t)}{v_s}>0.1710.
\label{eq:Gv}
\end{align}
Moreover $x_2(t)\ge0$ on $[0,T]$.  Hence $x(t)\in K_0$ for every $t$.
\end{lemma}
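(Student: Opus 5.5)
The plan is a certified two-dimensional branch-and-bound over $[0,T]^2$, followed by a short direct argument for $x_2\ge0$.

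\textbf{Step 1: decompose the square.} Split $[0,T]^2$ into the $25$ phase cells $I_i\times I_j$. On each cell $G_u$ and $G_v$ are explicit real-analytic functions: by \eqref{eq:path}, $x(s)$ and $x(t)$ are given by the phase formulas with parameters enclosed in $Z$ by Lemma~\ref{lem:kraw22}, and $\pi$, $\phi$, $\theta$ are enclosed in rational intervals. The breakpoints $\phi,\theta,T-\theta,T-\phi$ are only known to lie in the tiny boxes $\Phi,\Theta$. I would therefore let each cell overlap its neighbours by the width of these boxes. Because the phases join $C^0$ at the exact zero, evaluating the adjacent phase formula slightly past its breakpoint is harmless.

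\textbf{Step 2: bound each cell.} On each cell I subdivide uniformly into subrectangles and bound $G_u$ and $G_v$ from below by exact-rational interval arithmetic.
\begin{itemize}
\item I would use a centred (mean-value) form rather than the naive extension. The reason is that near the diagonal $s=t$ the bound needs cancellation: $G_u(s,s)=G_v(s,s)=1$, while the derivative in $t$ is $-\alpha(t)\cos(s-t)$-type and bounded.
\item Symmetry \eqref{eq:pathsymmetry} halves the work. The map $(s,t)\mapsto(T-s,T-t)$ exchanges the roles of $u$ and $v$ up to the reflection $\diag(-1,1)$. Explicitly, $\ip{x(T-s)-x(T-t)}{u_{T-s}}$ equals $\ip{x(s)-x(t)}{v_s}$. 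So it suffices to certify \eqref{eq:Gu} alone, which then gives \eqref{eq:Gv}.
\item The adaptive rule: bisect any subrectangle whose lower bound fails to exceed $0.1710$, and stop once every leaf is certified.
\end{itemize}

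\textbf{Step 3: the main obstacle.} The margin itself is comfortable, since $0.171$ is well away from zero. The hard part is the dependency effect near the diagonal and near the corner cells where $s$ and $t$ lie in opposite end phases. There $\ip{x(s)-x(t)}{u_s}$ is dominated by the long span $L$ of \eqref{eq:L}, and the infimum of $G_u$ is approached. I expect the minimum $\approx0.171$ to occur at such off-diagonal pairs, for example $s$ in phase 1 and $t$ in phase 5. If the naive grid is too coarse there, I would first try a second-order Taylor enclosure of $G_u$ in $t$ for fixed $s$-intervals.

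\textbf{Step 4: the base constraint.} For $x_2(t)\ge0$ I argue directly. We have $x_2(0)=0$. By \eqref{eq:derivativesymmetry}, $x_2(T-t)=x_2(t)$, so it suffices to treat $[0,T/2]$ and show $x_2'(t)\ge0$ there. Here $x_2'=\alpha\sin t+\beta\cos t$, and interval enclosures of $\alpha$ and $\beta$ on the phases covering $[0,T/2]$ certify nonnegativity. Alternatively, $x_2\ge0$ is itself checked on the same grid.

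\textbf{Step 5: conclude $x(t)\in K_0$.} Given \eqref{eq:Gu}, \eqref{eq:Gv} and $x_2\ge0$, the point $q=x(t)$ satisfies every defining inequality of \eqref{eq:K0} strictly. Indeed, $\ip{x(t)}{u_s}<\ip{x(s)}{u_s}+1$ for all $s$, and similarly for $v_s$. Hence $x(t)\in K_0$ for every $t$.
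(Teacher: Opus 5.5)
\textbf{The gap is in Step~4.} On $I_3$ the linear relation $c_1-c_2=T$ gives $\alpha+\beta=T-2t$. Hence $x_2'(T/2)=(\alpha+\beta)/\sqrt2=0$ exactly, as it must be since $x_2'(T-t)=-x_2'(t)$, and $x_2'$ has a simple zero there.

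It follows that every interval enclosure of $\alpha\sin t+\beta\cos t$ over a subinterval ending at $T/2$ has a negative lower endpoint. This holds even at the single point $T/2$, because $T$ and the parameters are only enclosed. So no amount of subdivision certifies $x_2'\ge0$ on all of $[0,T/2]$. Your fallback of checking $x_2\ge0$ on the grid fails for the same reason at $t=0$ and $t=T$, where $x_2=0$ exactly.

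The missing ingredient is an exact treatment of the degenerate point. The paper uses interval arithmetic only on $I_1\cup I_2$. On $I_3\cap(0,T/2)$ it rewrites $x_2'=\beta(\cos t-\sin t)+(T-2t)\sin t$, which is positive because $\beta>0$ and $t<T/2$. Alternatively, you could do three things: certify $x_2'>0$ near $t=0$, where $x_2'(0)=\beta(0)=2a_1-1>0$; certify $x_2>0$ directly on a middle interval containing $T/2$; and then use $x_2(T-t)=x_2(t)$.

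\textbf{Your treatment of \eqref{eq:Gu}--\eqref{eq:Gv} is sound} and differs from the paper only computationally. The paper does no two-dimensional box evaluation. It takes the $65\times65$ lattice $iT/64$, certifies lattice values $>0.3357$, and subtracts one global Lipschitz loss $(M+R)h<0.1647$ built from $\sup\|x\|_1<3.062$ and $\sup\|x'\|_1<3.645$. That needs only point evaluations and two one-dimensional sup bounds, at the cost of roughly halving the margin. Your adaptive scheme needs more machinery (overlapping phase cells, centred forms) but could certify a sharper constant. Your reduction $G_v(s,t)=G_u(T-s,T-t)$ via \eqref{eq:pathsymmetry} is correct and is a genuine saving.

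\textbf{Your picture of where the constraint is tight is wrong.} The constant $0.1710$ is the lattice minimum minus the Lipschitz loss, not the infimum. The smallest value is about $0.3357$, essentially $G_u(T,T/2)=1-x_2(T/2)$, which is the clearance between the top of the niche and the top wall. At your suggested corner, $G_u(0,T)=1-x_1(T)\approx2.23$. This does not invalidate the branch-and-bound; it only means the target is far from tight.
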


\begin{proof}
Use the $65\times65$ lattice $s_i=t_i=iT/64$.  Exact interval evaluation,
uniformly over the certified parameter boxes and the certified interval for
$\pi$, gives
\[
G_u(s_i,t_j),G_v(s_i,t_j)>0.3357
\]
at every lattice point.  The same phasewise evaluation gives
\[
\sup_t\|x(t)\|_1<3.062,
\qquad
\sup_t\|x'(t)\|_1<3.645.
\]
Let these two upper bounds be $R$ and $M$, and put $h=T/64$.  For either
$G=G_u,G_v$, the absolute values of the $s$- and $t$-derivatives are bounded by
$M+2R$ and $M$, respectively.  Every point of a grid cell is within $h/2$ in
each coordinate of a vertex, so the loss from a lattice value is at most
\[
 \frac{h}{2}(M+2R)+\frac{h}{2}M=(M+R)h<0.1647.
\]
Thus $G_u,G_v>0.3357-0.1647=0.1710$.  The verifier's unrounded continuum
margin is greater than $0.171132919416$.

For the base condition, exact interval evaluation gives $x_2'>0$ on
$I_1\cup I_2$.  On $I_3\cap(0,T/2)$, use
$x'=R_t(\alpha,\beta)$ and $c_2=c_1-T$ to obtain
\[
x_2'=\alpha\sin t+\beta\cos t
=\beta(\cos t-\sin t)+(\alpha+\beta)\sin t>0,
\]
because $\beta>0$, $t<T/2$, and $\alpha+\beta=T-2t>0$.
Equation~\eqref{eq:pathsymmetry} gives $x_2(T-t)=x_2(t)$, while
$x_2(0)=x_2(T)=0$.  Therefore $x_2\ge0$ on the whole interval.
\end{proof}

\begin{proposition}[Certified cap and niche containment]\label{prop:cap}
The set $K_0$ is a compact convex cap.  Its support functions satisfy
\[
h_{K_0}(s)=\ip{x(s)}{u_s}+1,
\qquad
h_{K_0}(s+T)=\ip{x(s)}{v_s}+1.
\]
Consequently the inner corner reconstructed from $K_0$ is exactly $x$.
Moreover,
\[
 \mathcal N(K_0)\subset K_0,
 \qquad
 G:=K_0\setminus\mathcal N(K_0)
 \quad\hbox{is connected},
\]
and the cap associated with $G$ in Baek's cap--niche construction is $K_0$.
\end{proposition}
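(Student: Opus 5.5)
We split the proof into four parts: compactness and convexity of $K_0$, the two support identities, containment of the niche, and connectedness of $G$ together with the identification of its cap.

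\emph{Compactness and convexity.} By \eqref{eq:K0}, $K_0$ is an intersection of closed half-planes, so it is closed and convex. For boundedness we use only the four constraints at $s=0$ and $s=T$ together with $q_2\ge0$. The constraint at $s=0$ in direction $u_0$ gives $q_1\le x_1(0)+1$. The constraint at $s=0$ in direction $v_0$ gives $q_2\le x_2(0)+1$. The constraint at $s=T$ in direction $v_T=-u_0$ bounds $q_1$ from below. Hence $K_0$ lies in a bounded rectangle. It is nonempty because it contains the curves $A$ and $C$ (Lemma~\ref{lem:support}).

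\emph{Support identities.} Fix $s$. The constraint at $s$ gives $h_{K_0}(s)\le\ip{x(s)}{u_s}+1$. By Lemma~\ref{lem:support}, $A(s)\in K_0$ attains this value, so equality holds. The same argument with $C(s)$ and $v_s=u_{s+T}$ gives $h_{K_0}(s+T)=\ip{x(s)}{v_s}+1$. Substituting into \eqref{eq:supporting-map}, the translation part of $f_{K_0,s}$ becomes
\[
\ip{x(s)}{u_s}u_s+\ip{x(s)}{v_s}v_s=x(s),
\]
so $f_{K_0,s}(p)=x(s)+R_sp$. The image of the inner corner $0$ of $L$ is therefore $x(s)$, which proves that the reconstructed corner is exactly $x$. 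Here Lemma~\ref{lem:grid} is used only to confirm that $x(s)\in K_0$, which is consistent with the reconstruction.

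\emph{Niche containment.} We have $Q^-_{K_0}(t)=\{q:\ip{q-x(t)}{u_t}<0,\ \ip{q-x(t)}{v_t}<0\}$. Let $q\in F_T\cap Q^-_{K_0}(t)$; we must show that $q$ satisfies every half-plane constraint in \eqref{eq:K0}. The idea is that $q$ lies in the open quadrant behind $x(t)$, which is "inside" the translated hallway, so it should stay inside the strip constraints. We do not expect this to follow from a pointwise inequality alone. Instead we write $q=x(t)-\mu u_t-\nu v_t$ with $\mu,\nu>0$. Then
\[
\ip{q}{u_s}-\ip{x(s)}{u_s}-1=-G_u(s,t)-\mu\cos(s-t)+\nu\sin(s-t),
\]
and the analogous expression holds for $v_s$. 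When $|s-t|$ is small, the terms involving $\mu,\nu$ are essentially nonpositive, and the margin $G_u>0.171$ absorbs the rest. For larger $|s-t|$, $\mu$ and $\nu$ are bounded because $q_2\ge0$ and $q$ lies in the bounded region below the supporting lines, and we then reduce to the same quadrant computation.

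The cleanest route is probably to observe that $Q^-_{K_0}(t)\cap F_T$ is contained in the convex hull of $x(t)$ and the points where the rays $x(t)-\mu u_t$ and $x(t)-\nu v_t$ hit the line $q_2=0$. By convexity of $K_0$, it then suffices to show that these two base points lie in $K_0$, which are again one-parameter interval checks. We regard this step as the main obstacle. If the triangle argument fails near $t=0$ or $t=T$, where the quadrant degenerates, the fallback is a direct interval verification over $(s,t)$ on a grid, using the Lipschitz bounds $R$ and $M$ from Lemma~\ref{lem:grid}.

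\emph{Connectedness and cap identification.} The niche is $F_T\cap\bigcup_t Q^-(t)$. Each point of it can be moved along $-v_t$ downward within $Q^-(t)$, so it connects to the base line. We show that $G$ is star-shaped-like with respect to the segment from $C(0)$ to $A(T)$, namely the top edge $q_2=1$. For $q\in G$, the vertical segment from $q$ up to $\partial K_0$ avoids the niche, because each $Q^-(t)$ is a quadrant opening downward-left relative to $x(t)$. Moving upward only leaves such a quadrant when $v_t$ has a positive vertical component, which holds for $t\in(0,T)$. This places $q$ in the same component as the top edge, so $G$ is connected.

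Finally, $G\supset A\cup C$ and the top edge, and the curves $A$ and $C$ realize $h_{K_0}$ in all relevant directions. The support function of $G$ restricted to $[0,\pi]$ therefore coincides with that of $K_0$, so the cap of $G$ in Baek's construction, which is the intersection of $F_T$ with the supporting strips of $G$, equals $K_0$.
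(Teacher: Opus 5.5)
Your compactness and support-function parts agree with the paper. There is a genuine gap at the niche containment $\mathcal N(K_0)\subset K_0$, which is the heart of the proposition. Your connectedness and cap-identification paragraphs also depend on it: the upper boundary of $K_0$, and the curves $A$ and $C$, are known to avoid the niche only once the niche is known to lie inside $K_0$.

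Your first attempt does not work as written. The correct expansion is $-G_u(s,t)-\mu\cos(s-t)-\nu\sin(s-t)$. The boundedness of $\mu,\nu$ that you justify by ``$q$ lies in the bounded region below the supporting lines'' presupposes $q\in K_0$, which is what you are proving. Your triangle reduction is correct, but you stop exactly at the decisive step: you call membership of the two base points the main obstacle and defer it to unspecified interval checks or a grid. Such a grid would be awkward near $t=0,T$, where $1/\sin t$ and $1/\cos t$ enter. Note also that you treat $x(s)\in K_0$ as a mere consistency remark, whereas it is precisely the input that drives this step. The paper does not argue by hand here: it feeds $x(s)\in K_0$ from Lemma~\ref{lem:grid} into Baek's general cap--niche criterion (Theorem 2.5.8 of \cite{Baek}) and cap characterization (Theorem 2.5.9).

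Your route can in fact be closed with no further computation.
\begin{itemize}
\item For $0<t<T$, $F_T\cap Q_t^-$ lies in the triangle with vertices $x(t)$, $P_1=x(t)-\frac{x_2(t)}{\sin t}u_t$ and $P_2=x(t)-\frac{x_2(t)}{\cos t}v_t$. The vertex $x(t)$ is in $K_0$ by Lemma~\ref{lem:grid}.
\item Since $A(0)=(1,0)$ and $C(T)=(x_1(T)-1,0)$ lie in $K_0$, so does the base segment between them. It therefore suffices to show $x_1(T)-1\le P_{i,1}\le1$.
\item Because $-v_t$ points to the right, $P_{2,1}\ge x_1(t)\ge x_1(T)-1$.
\item The constraints $q_1\le1$ and $q_2\le1$ on $K_0$ give $h_{K_0}(t)\le\cos t+\sin t$. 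Hence $P_{2,1}\cos t=\ip{x(t)}{u_t}=h_{K_0}(t)-1\le\cos t$, so $P_{2,1}\le1$.
\item The bounds for $P_1$ are symmetric, using $q_1\ge x_1(T)-1$ and $q_2\le1$.
\item Convexity then places the triangle, and hence the niche, inside $K_0$.
\end{itemize}

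With this in place, your connectedness argument, which is exactly the mechanism of Remark~\ref{rem:connectedness}, goes through once you add two points. First, state explicitly that the upper boundary of $K_0$ avoids the niche: a point of the open set $Q_t^-$ on that boundary would have points of $F_T\cap Q_t^-\subset K_0$ directly above it, which is impossible. Second, the upward invariance of the complement of $Q_t^-$ uses both $\ip{e_2}{u_t}=\sin t\ge0$ and $\ip{e_2}{v_t}=\cos t\ge0$, not only the latter.

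The completed argument would be a self-contained, elementary replacement for the appeal to Baek's general theorems. You should also include the brief check of the cap normalization that the paper gives: allowed normals, $h_{K_0}(0)=h_{K_0}(T)=1$, and the top face at height one.
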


\begin{proof}
The set $K_0$ is an intersection of closed half-planes, hence it is closed and
convex.  It is nonempty by Lemma~\ref{lem:support}.  The constraints at $s=0$
and $s=T$, together with $q_2\ge0$, place it in the finite rectangle
\[
 x_1(T)-1\le q_1\le1,\qquad 0\le q_2\le1,
\]
so it is compact.  Its defining normals are precisely the allowed cap normals.
The endpoint support points displayed in Lemma~\ref{lem:support} give the
normalization at angles $0,T,\pi$, and the top horizontal support face lies at
height one.  Thus $K_0$ is a cap with rotation angle $T$.

The defining inequalities give upper bounds for the two rotating support
functions, and Lemma~\ref{lem:support} supplies points attaining them.  The
intersection of the corresponding supporting lines is therefore $x(s)$.
Lemma~\ref{lem:grid} gives $x(s)\in K_0$ for every $s$.  Baek's general
cap--niche criterion, Theorem 2.5.8 of \cite{Baek}, then yields
$\mathcal N(K_0)\subset K_0$ and connectedness of its complement in $K_0$;
the associated-cap statement follows from the general cap characterization,
Theorem 2.5.9 of \cite{Baek}.  These are general results from the cap theory
and do not use the Gerver-specific assertions in Baek's Section~8.
\end{proof}

\subsection{The critical inner-wall separation}

Put
\[
 p=x(\phi)=B(\eta),
 \qquad h(t)=\ip{p-x(t)}{u_t}.
\]

\begin{lemma}[Certified separation]\label{lem:critical}
One has
\[
 h(\phi)=h(\eta)=0,
 \qquad h(t)>0\quad(\phi<t<\eta),
\]
and
\begin{equation}\label{eq:midmargin}
 h(T/2)\in
[0.0012381960467469534857,
 0.0012381960467469534871].
\end{equation}
\end{lemma}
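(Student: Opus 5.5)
We prove the two zeros first, then positivity by a derivative analysis, and finally the midpoint value. At $t=\phi$ the identity $h(\phi)=0$ is immediate from $p=x(\phi)$. At $t=\eta$ we use the certified contact equation \eqref{eq:contact43}, $p=B(\eta)=x(\eta)+\alpha(\eta)v_\eta$. This gives $p-x(\eta)=\alpha(\eta)v_\eta$, which is orthogonal to $u_\eta$, so $h(\eta)=0$.

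For positivity, differentiate:
\[
h'(t)=-\ip{x'(t)}{u_t}+\ip{p-x(t)}{v_t}=-\alpha(t)+\ip{p-x(t)}{v_t}.
\]
We would rather work with the second derivative. We have
\[
h''(t)=-\alpha'(t)-\beta(t)-\ip{p-x(t)}{u_t}=-(\alpha'+\beta)(t)-h(t).
\]
This gives the linear ODE $h''+h=-(\alpha'+\beta)=:g(t)$. From the phase formulas, $\alpha'+\beta$ is explicit on each phase. On $I_2$, $I_3$ and $I_4$ the quantity $\alpha'+\beta$ equals minus the radius of curvature type quantities $\lambda_A$ or $\lambda_C$ (up to sign) appearing in Lemma~\ref{lem:support}, e.g.\ on $I_3$ one has $\alpha+\beta=T-2t$. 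We expect $g\ge0$ there with strict positivity on a subinterval.

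The interval $[\phi,\eta]$ has length $T-\theta-\phi<\pi$. The Sturm comparison then applies: a solution of $h''+h=g$ with $g\ge0$, $g\not\equiv0$, vanishing at both endpoints of an interval of length less than $\pi$, is strictly positive inside. Explicitly, by the Green's function representation
\[
h(t)=\int_\phi^\eta G(t,s)\,g(s)\dd s,\qquad G(t,s)=\frac{\sin(\min(t,s)-\phi)\,\sin(\eta-\max(t,s))}{\sin(\eta-\phi)}>0 .
\]
So the plan is to certify the sign of $g$ on $I_2\cup I_3\cup I_4\cap[\phi,\eta]$ by exact interval evaluation over the certified parameter boxes, phase by phase, using the closed forms.

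The main obstacle is that $g$ may not have a fixed sign on all three phases. This matters most near the junctions $\theta$ and $\eta$, where one-sided values differ and the margin $h(T/2)\approx1.2\times10^{-3}$ is tiny. If the sign argument fails, the fallback is a subdivision of $(\phi,\eta)$ handled as follows:
\begin{enumerate}
\item Near each endpoint, positivity would come from certified signs $h'(\phi)>0$ and $h'(\eta)<0$, together with a bound on $h''$ on short subintervals.
\item On the interior subintervals, direct interval evaluation of $h$ would give positive lower bounds.
\end{enumerate}

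Finally, \eqref{eq:midmargin} is a single evaluation. At $t=T/2$ we lie in phase 3, where $x(T/2)=R_{T/2}(c_1-T/2,\,c_2+T/2)+\kappa_3$ is explicit. The point $p=x_1(\phi)$ comes from the phase-1 formula. We would evaluate $h(T/2)$ with the exact-rational interval arithmetic on $Z$ from Lemma~\ref{lem:kraw22}, including the enclosure of $\pi$, at sufficiently high Taylor order to reach $10^{-21}$ width, and check that the output lies in the displayed interval.
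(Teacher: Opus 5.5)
There is a genuine gap in the positivity step. Your derivation of $h''+h=g$ with $g=-(\alpha'+\beta)$ is correct, and your treatment of the two zeros and the midpoint evaluation matches the paper. The trouble is the Green's function argument, which fails for two reasons.

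\emph{The sign is reversed.} The Wronskian of $\sin(t-\phi)$ and $\sin(\eta-t)$ is $-\sin(\eta-\phi)$. So the Dirichlet solution is $h(t)=-\int_\phi^\eta G(t,s)g(s)\dd s$, with your $G>0$. As a sanity check, on a short interval $h''\approx g\ge0$ makes $h$ convex with zero endpoint values, hence $h\le0$. So $g\ge0$ forces $h\le0$, not $h>0$.

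\emph{The forcing $g$ has no fixed sign on $[\phi,\eta]$.} On $I_2\cup I_3$ one has $\alpha'=-1$, so $\alpha'+\beta=\lambda_A-1=\beta-1$ and $g=1-\beta$. On $I_3$ this equals $t-c_1$; your example conflates $\alpha'+\beta$ with $\alpha+\beta=T-2t$. Hence $g(\phi)=1-b<-0.399$, which is exactly the paper's bound $h_2''(\phi)<-0.399$. On the other hand, $g>0$ on all of $I_3$ because $\theta>c_1$. So neither your criterion $g\ge0$ nor the correctly signed criterion $g\le0$ is available.

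\emph{The fallback breaks at the right endpoint.} The contact relation makes $\eta$ a double zero:
\[
h'(\eta)=-\alpha(\eta)+\ip{\alpha(\eta)v_\eta}{v_\eta}=0 .
\]
So $h'(\eta)<0$ cannot be certified. Moreover, no interval enclosure of $h$ on a subinterval reaching $\eta$ can have a positive lower bound, since $h$ vanishes quadratically there. The missing ingredient is second order: positivity to the left of $\eta$ comes from convexity.

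The paper obtains $h_3''>0$ on $I_3$ from $h_3''(\theta)>0.0510$ and $h_3'''>0$, and then integrates back from $h_3(\eta)=h_3'(\eta)=0$. On $I_2$ it combines $h_2'''>0$ with the endpoint signs of $h_2'$. This makes $h_2$ unimodal, so its minimum on $I_2$ lies at $\phi$ or $\theta$.

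\emph{How your ODE can be repaired.} Because of the double zero at $\eta$,
\[
h(t)=\int_t^\eta\sin(s-t)\,g(s)\dd s .
\]
This is positive for $t_*\le t<\eta$, where $t_*\in(\phi,\theta)$ is the unique zero of $1-\beta$. That zero exists and is unique because $\beta'=b_1-t/2<0$ on $I_2$, $\beta(\phi)=b>1$, and $\beta(\theta)=1+c_1-\theta<1$. On $(\phi,t_*)$ one has $g<0$. There, the correctly signed Dirichlet representation on $[\phi,t_*]$, together with $h(\phi)=0<h(t_*)$, gives $h>0$. This route replaces the paper's third-derivative enclosures with the three certified signs $b>1$, $b_1<0$, $\theta>c_1$.
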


\begin{proof}
On $I_2$, set $(P_2,Q_2)=p-\kappa_2$.  Then
\[
h_2(t)=P_2\cos t+Q_2\sin t+\tfrac14t^2-b_1t-b_2.
\]
Exact interval differentiation gives
\begin{align*}
h_2'(\phi)&>0.0944,& h_2''(\phi)&<-0.399,\\
h_2''(\theta)&>0.0510,& h_2'(\theta)&<-0.0328,\\
h_2(\theta)&>0.0041881915317373,&
h_2'''(I_2)&\subset(0.359,0.994).
\end{align*}
Thus $h_2''$ is strictly increasing.  The function $h_2'$ has a unique
minimum; the displayed endpoint signs imply that it changes sign exactly once,
from positive to negative, on $I_2$.  Hence $h_2$ has no interior minimum.
Since $h_2(\phi)=0$ and $h_2(\theta)>0$, it follows that $h_2>0$ in the
interior.

On $I_3$, with $(P_3,Q_3)=p-\kappa_3$,
\[
h_3(t)=P_3\cos t+Q_3\sin t-c_1+t.
\]
The certificate gives
\[
 h_3''(\theta)>0.0510,
 \qquad h_3'''(I_3)\subset(0.910,1.123).
\]
The exact contact relation $p=B(\eta)$ gives
$h_3(\eta)=h_3'(\eta)=0$.  Therefore $h_3''>0$,
$h_3'<0$ on $[\theta,\eta)$, and $h_3>0$ there.  Direct interval substitution
at $t=T/2$ gives \eqref{eq:midmargin}.  Its positive lower endpoint is more
than seventeen orders of magnitude larger than the width of the enclosure.
\end{proof}

\subsection{The niche boundary}

For fixed $t\in(0,T)$, write the two inner-wall lines as graphs over horizontal
coordinate $X$:
\begin{align}
b_t(X)&=\frac{\ip{x(t)}{u_t}-X\cos t}{\sin t},
\label{eq:bt}\\
d_t(X)&=\frac{\ip{x(t)}{v_t}+X\sin t}{\cos t}.
\label{eq:dt}
\end{align}
Differentiation gives the exact envelope identities
\begin{equation}\label{eq:envelopes}
 \partial_t b_t(X)=\frac{X-B_1(t)}{\sin^2t},
 \qquad
 \partial_t d_t(X)=\frac{X-D_1(t)}{\cos^2t}.
\end{equation}
The intersection of the instantaneous inner quadrant with the upper half-plane
has, on every nonempty vertical section, upper endpoint
$\min\{b_t(X),d_t(X)\}$.  Thus the upper boundary of the niche is the upper
envelope of these instantaneous roofs.

\begin{lemma}[No hidden crossing of the core]\label{lem:core}
For $\phi\le r\le t\le T$ and $0\le t\le r\le\tau$, respectively,
\begin{align}
U(r,t)&:=\ip{x(r)-x(t)}{u_t}\ge0,
\label{eq:U}\\
V(t,r)&:=\ip{x(r)-x(t)}{v_t}\ge0.
\label{eq:V}
\end{align}
\end{lemma}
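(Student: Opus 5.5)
The plan is to reduce \eqref{eq:V} to \eqref{eq:U} by the path symmetry, and then prove \eqref{eq:U} by a divided-difference argument certified on a triangular grid, in the same style as Lemma~\ref{lem:grid}.

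\emph{Reduction of \eqref{eq:V} to \eqref{eq:U}.} By \eqref{eq:pathsymmetry}, $x(T-s)=\diag(-1,1)x(s)+(\ell,0)$. A direct computation gives
\[
\diag(-1,1)\,v_t=(\sin t,\cos t)=u_{T-t}.
\]
Hence
\[
V(t,r)=\ip{x(r)-x(t)}{v_t}=\ip{x(T-r)-x(T-t)}{u_{T-t}}=U(T-r,T-t).
\]
The range $0\le t\le r\le\tau$ maps exactly onto $\phi\le T-r\le T-t\le T$. So it suffices to prove \eqref{eq:U}.

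\emph{Divided-difference form of $U$.} Since $x'=R_s(\alpha,\beta)$ and $\ip{v_s}{u_t}=\sin(t-s)$,
\[
U(r,t)=-\int_r^t g_t(s)\dd s,\qquad g_t(s)=\alpha(s)\cos(t-s)+\beta(s)\sin(t-s).
\]
Because $U(t,t)=0$, I would work with
\[
W(r,t)=\frac{U(r,t)}{t-r}=-\int_0^1 g_t\bigl(r+\sigma(t-r)\bigr)\dd\sigma ,
\]
which extends continuously to the diagonal with $W(t,t)=-\alpha(t)$. The goal is then $W\ge0$, in fact $W>0$, on the closed triangle $\phi\le r\le t\le T$.

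\emph{Diagonal.} I first check $\alpha<0$ on $[\phi,T]$ phasewise, from the explicit formulas for $x'$ and the certified boxes.
\begin{itemize}
\item At $\phi$, $\alpha(\phi)=-a<0$.
\item On $I_3$, $\alpha=-\beta+(T-2t)$.
\item Near $T$, the symmetry gives $\alpha(T-t)=-\beta(t)$, so $\alpha(T)=-\beta(0)$.
\end{itemize}
This needs $\beta>0$ on $[0,\tau]$, which I would certify together with the other derivative bounds.

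\emph{Off the diagonal.} Here $W$ is a smooth function of $(r,t)$ on each product of phases. I would subdivide the triangle into phase-aligned cells and bound $W$ below on each cell. Two tools are available:
\begin{itemize}
\item interval evaluation of the integral representation with a mean-value enclosure of $g_t$, or
\item lattice values combined with Lipschitz bounds for $W$, which follow from bounds on $\alpha,\beta,\alpha',\beta'$ as in Lemma~\ref{lem:grid}.
\end{itemize}

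\emph{Main obstacle.} The hard part is the corner $r=\phi$. Since $\alpha(\phi)=-a$ is only about $-0.094$ and $\beta>0$ pushes $g_t$ upward when $t>s$, the margin of $W$ may be small along $r=\phi$. There I would use the analytic structure already exploited in Lemma~\ref{lem:critical}, because $U(\phi,t)$ is closely related to $h(t)$ for $t\le\eta$, together with finer subdivision. If the margin becomes too thin near $(\phi,\eta)$, I would treat $U(\phi,t)$ separately via the sign analysis of $h_2,h_3$ from Lemma~\ref{lem:critical}, and then extend to $r>\phi$ using the sign of $\partial_rU=g_t(r)$ where it is certifiably nonpositive.
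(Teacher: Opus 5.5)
Your reduction of \eqref{eq:V} to \eqref{eq:U} through $V(t,r)=U(T-r,T-t)$ is correct, and it is exactly the paper's reflection step. Your treatment of \eqref{eq:U}, however, has a genuine gap. The strengthened target, $W>0$ on the closed triangle, is false. Because $p=x(\phi)=B(\eta)=x(\eta)+\alpha(\eta)v_\eta$, one has $U(\phi,\eta)=h(\eta)=0$. Moreover $\partial_tU(\phi,t)|_{t=\eta}=h'(\eta)=-\alpha(\eta)+\ip{p-x(\eta)}{v_\eta}=0$, so $U(\phi,\cdot)$ has a double zero at $\eta$, approached from both sides. The margin there is not thin but zero, so no lattice-plus-Lipschitz certificate can certify any cell containing $(\phi,\eta)$. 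Along much of the edge $r=\phi$ the margin is also tiny (e.g.\ $U(\phi,T/2)=h(T/2)\approx1.24\cdot10^{-3}$).

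Your fallback does not repair this, for two reasons.
\begin{itemize}
\item Lemma~\ref{lem:critical} controls $U(\phi,t)=h(t)$ only for $t\le\eta$. You give no argument for $U(\phi,t)\ge0$ on $(\eta,T]$, where the same double zero defeats a grid.
\item To push nonnegativity from the edge $r=\phi$ into $r>\phi$ you need $\partial_rU=g_t(r)\ge0$, not $\le0$. Nonpositivity only propagates the value $0$ from the diagonal $r=t$ downward.
\end{itemize}

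The missing idea is that both propagations hold at once, which removes the need for any two-dimensional grid. On each phase in $[\phi,T]$ one has $\alpha<0<\beta$ and $\alpha',\beta'<0$. Hence $\det(x',x'')=\alpha^2+\beta^2+\alpha\beta'-\beta\alpha'>0$, and the tangent angle $\psi$ of $x'$ increases strictly, across the junctions by continuity of $x'$. Since $(\alpha,\beta)$ lies in the open second quadrant, $0<\psi(r)-t<\pi$ for $\phi\le r\le t\le T$. Therefore $\partial_rU(r,t)=|x'(r)|\cos(\psi(r)-t)$ changes sign at most once, from positive to negative. So $U(\cdot,t)$ is minimized on $[\phi,t]$ either at $r=t$, where it vanishes, or at $r=\phi$, where it equals $h(t)$.

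For $t\le\eta$, nonnegativity of $h(t)$ is Lemma~\ref{lem:critical}. For $t\ge\eta$ the paper uses that $B_1$ is increasing on $[\eta,T]$, together with \eqref{eq:envelopes} at $X=p_1=B_1(\eta)$. This gives $b_t(p_1)\le b_\eta(p_1)=p_2$, i.e.\ $h(t)\ge0$. An equivalent argument: $h''+h=-B'\!\cdot v_t>0$ on $(\eta,T)$, with $h(\eta)=h'(\eta)=0$ and $T-\eta<\pi$.
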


\begin{proof}
The certified signs are $\alpha<0<\beta$ throughout $(0,T)$.  On every smooth
phase contained in $[\phi,T]$, one also has $\alpha'<0$ and $\beta'<0$.
Consequently
\[
 \det(x',x'')=\alpha^2+\beta^2+\alpha\beta'-\beta\alpha'>0.
\]
The tangent angle $\psi$ of $x'$ is therefore strictly increasing on each
phase and, by continuity of $x'$, on the whole interval.  Since
$x'=R_t(\alpha,\beta)$ with $(\alpha,\beta)$ in the second quadrant,
\[
 T<\psi(r)<r+\pi.
\]
For $\phi\le r\le t\le T$, this gives
$0<\psi(r)-t<\pi$.  Hence
\[
 \partial_rU(r,t)=\ip{x'(r)}{u_t}
 =|x'(r)|\cos(\psi(r)-t)
\]
changes sign at most once, from positive to negative.  The minimum of
$U(\,\cdot\,,t)$ is therefore attained at $r=\phi$ or $r=t$.
The latter value is zero.  If $t\le\eta$, the former value is $h(t)$ and is
nonnegative by Lemma~\ref{lem:critical}.  If $t\ge\eta$, put
$p_1=B_1(\eta)$.  On $I_4$ and $I_5$,
\[
B'\!\cdot v_t=d_1-1-\tfrac12t<0,
\qquad B'\!\cdot v_t=-\tfrac12<0,
\]
respectively, so $B_1'>0$.  Identity~\eqref{eq:envelopes} at $X=p_1$ gives
$b_t(p_1)\le b_\eta(p_1)=p_2$, which is equivalent to
$U(\phi,t)\ge0$.  Reflection under \eqref{eq:pathsymmetry} proves
\eqref{eq:V}.
\end{proof}

\begin{theorem}[Certified niche topology]\label{thm:niche}
The niche $\mathcal N(K_0)$ is the Jordan region whose positively oriented
boundary is
\begin{equation}\label{eq:nicheboundary}
[D(0),B(T)]
 * B|_{[\eta,T]}^{\rev}
 * x|_{[\phi,\tau]}
 * D|_{[0,\theta]}^{\rev}.
\end{equation}
In particular, no additional inner-wall crossing or hidden component occurs.
\end{theorem}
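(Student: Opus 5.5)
The niche $\mathcal N(K_0)=F_T\cap\bigcup_{0<t<T}Q^-(t)$ will be described by vertical sections, using the roof description after \eqref{eq:envelopes}. By Proposition~\ref{prop:cap}, $Q^-(t)=\{q:\ip{q}{u_t}<\ip{x(t)}{u_t},\ \ip{q}{v_t}<\ip{x(t)}{v_t}\}$. Hence for each abscissa $X$ the section of the niche is $\{X\}\times[0,\rho(X))$, where
\[
\rho(X)=\sup_{0<t<T}\min\{b_t(X),d_t(X)\},
\]
restricted to those $X$ for which this supremum is positive. The plan is to compute $\rho$ explicitly and show that its graph is the upper arc $B|_{[\eta,T]}^{\rev} * x|_{[\phi,\tau]} * D|_{[0,\theta]}^{\rev}$. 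The base $[D(0),B(T)]$ on the line $q_2=0$ then closes the curve. The region under the graph of a continuous function that is positive on an open interval and vanishes at its ends is a Jordan region. Its boundary, traversed along the base from left to right and then back along the roof, is positively oriented, so the orientation claim follows once the ordering $D_1(0)<B_1(T)$ is checked from the endpoint formulas.

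The first step fixes, for each $X$, which instantaneous roof is maximal. For fixed $t$, the roof $\min\{b_t,d_t\}$ is a tent with apex $x(t)$: it follows $d_t$ to the left of $x_1(t)$ and $b_t$ to the right. By \eqref{eq:envelopes}, $b_t(X)$ is increasing in $t$ exactly while $X>B_1(t)$, and $d_t(X)$ is increasing exactly while $X>D_1(t)$. The certified monotonicity of $B_1$ on $I_4\cup I_5$ (Lemma~\ref{lem:core}) and, by the reflection \eqref{eq:pathsymmetry}, of $D_1$ on $I_1\cup I_2$ gives the following description. For $X\in[B_1(\eta),B_1(T)]$ the supremum of the $b$-branch is attained at the unique $t\in[\eta,T]$ with $B_1(t)=X$, which yields the envelope arc $B|_{[\eta,T]}$. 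Symmetrically, for $X\in[D_1(0),D_1(\theta)]$ the $d$-branch gives $D|_{[0,\theta]}$. For $X$ between these ranges, the tent apices $x(t)$, $t\in[\phi,\tau]$, trace the path itself.

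Lemma~\ref{lem:core} is the tool that shows no other tent pokes above these pieces. Inequality \eqref{eq:U} says that every apex $x(r)$ with $r\ge\phi$ lies on or above each line $b_t$ for $t\ge r$; inequality \eqref{eq:V} gives the mirror statement for the lines $d_t$. Combined with the strictly increasing tangent angle of $x$ on $[\phi,\tau]$ (also from the proof of Lemma~\ref{lem:core}), this shows that $x|_{[\phi,\tau]}$ is a graph over $X$ that majorizes every roof in its $X$-range. It also shows that the path is tangent to the envelope arcs at the junctions $x(\phi)=B(\eta)$ and, by symmetry, $x(\tau)=D(\theta)$; the contact equation \eqref{eq:contact43} gives exactly these junction points. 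Lemma~\ref{lem:critical} is needed at the critical junction: the strict positivity $h>0$ on $(\phi,\eta)$ rules out a second touching of the wall $b_t$ with the point $p$, and hence rules out a hidden crossing that would pinch the niche into two components.

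The main obstacle is the bookkeeping showing that the three arcs join continuously, with $\rho$ single-valued and positive in between, and that the roofs with $t\in(0,\phi)$ and $t\in(\tau,T)$ contribute nothing beyond the $D$- and $B$-arcs. For the first point I would check that $B_1(\eta)=x_1(\phi)$ and $x_1(\tau)=D_1(\theta)$, and that $x_1$ is monotone on $[\phi,\tau]$, which follows from $x'=R_t(\alpha,\beta)$ with $\alpha<0<\beta$. For the second point I would use the explicit phase-1 and phase-5 formulas: there $\lambda_A=0$ and $\lambda_C=0$, respectively, so the corresponding walls pivot about fixed points. I would then apply \eqref{eq:envelopes} with the monotonicity of $B_1,D_1$ to show that these walls lie below the envelope already identified. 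Finally, positivity of $\rho$ forces each section to be a nonempty interval, which gives the Jordan-region description and excludes additional components.
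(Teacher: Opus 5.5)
Your skeleton is the paper's: vertical sections, tents with apex $x(t)$, Lemma~\ref{lem:core} on the core range, \eqref{eq:envelopes} plus monotonicity of $B_1$ on $[\eta,T]$, and the final monotonicity bookkeeping. The gap is in showing that, on the $B$-range $X\ge p_1=x_1(\phi)$, no roof with $t<\eta$ exceeds the $B$-arc (and the mirror statement on the $D$-range).

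\textbf{The middle times.} Your envelope step is valid only for the supremum over $t\in[\eta,T]$. It cannot be pushed back to earlier times, because $B_1$ is not monotone on $[\phi,\eta]$. On $I_2\cup I_3$ one has $B'\cdot v_t=\beta-1$, which is positive at $\phi$ (there $\beta=b>1.39$) and negative at $\eta$. So $B$ has a cusp in $(\phi,\eta)$. For $\phi\le t\le\eta$, Lemma~\ref{lem:critical} is not about ruling out a second touching at $p$. What it supplies is $b_t(p_1)\le p_2=b_\eta(p_1)$. The step you omit is the slope comparison $(b_\eta-b_t)'=\cot t-\cot\eta\ge0$, which propagates this to all $X\ge p_1$. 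It must then be followed by $b_\eta(X)\le b_r(X)=B_2(r)$ from your envelope step.

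\textbf{The early tail.} The more serious gap is $t\in(0,\phi)$, and by reflection $(\tau,T)$. Your mechanism misidentifies what is fixed. On $I_1$, $\lambda_A=0$ means that the \emph{outer} wall $\ip q{u_t}=h_{K_0}(t)$ pivots about $A(0)=(1,0)$. The inner wall $b_t$ is instead tangent to the moving arc $B(t)=(1-\cos t,-\sin t)$, and $D'=\tfrac12u_t$ moves as well.

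Envelope monotonicity cannot close this case either:
\begin{itemize}
\item For $X\ge p_1$ it gives only $d_q(X)\le d_\phi(X)$. This is useless, because $d_\phi$ rises to the right of $p$ while the $B$-arc falls.
\item For $X\le0<B_1(q)$ it makes $b_q(X)$ \emph{decreasing} in $q$, which is the wrong direction.
\end{itemize}

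The missing idea is the tent structure combined with a new quantitative fact: each early apex $x(q)$, $0\le q<\phi$, lies strictly below the line $b_\eta$. Given this, $b_\eta-d_q$ has slope $-\cot\eta-\tan q<0$ and $b_\eta-b_q$ has slope $\cot q-\cot\eta>0$. Hence positivity at $X=x_1(q)$ propagates over the whole tent on $X\ge p_1$, and $b_\eta$ lies below the $B$-arc on the $B$-range.

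This apex inequality does not follow from Lemma~\ref{lem:core}, which requires $r\ge\phi$, nor from Lemma~\ref{lem:critical}. The paper certifies it separately via $\ip{x'(q)}{u_\eta}>0.953$ on $[0,\phi]$.

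\textbf{A false side remark.} That same inequality at $q=\phi$ contradicts your claim that the path is tangent to the envelope arcs at the junctions. The path $x$ crosses $b_\eta$, the tangent line of $B$ at $p=B(\eta)$, transversally. So the niche boundary has corners at $x(\phi)$ and, by symmetry, at $x(\tau)$. This does not affect the theorem, but it should not be asserted.
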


\begin{proof}
We identify the upper envelope of the instantaneous roofs.  First take a core
point $x(r)$ with $r\in[\phi,\tau]$ and set $X=x_1(r)$.  If $t\ge r$, then
\eqref{eq:U} is equivalent to $b_t(X)\le x_2(r)$; if $t\le r$, then
\eqref{eq:V} is equivalent to $d_t(X)\le x_2(r)$.  Thus every instantaneous
roof at $X$ is at most $x_2(r)$, while at $t=r$ both walls meet at $x(r)$.
Therefore the core segment of the envelope is exactly
$x|_{[\phi,\tau]}$.

Now put $p=x(\phi)=B(\eta)$ and $p_1=x_1(\phi)$.  Since
$x_1'=\alpha\cos t-\beta\sin t<0$ on $(0,T)$, one has
$x_1(q)>p_1$ for $0\le q<\phi$.  Exact interval evaluation gives
\[
 \ip{x'(q)}{u_\eta}>0.953\qquad(0\le q\le\phi),
\]
and hence
\[
 \ip{x(q)-p}{u_\eta}
 =-\int_q^\phi\ip{x'(r)}{u_\eta}\dd r<0
 \qquad(0\le q<\phi).
\]
Thus the early-tail apex $x(q)$ lies strictly below $b_\eta$.  At that apex
$b_q=d_q=x_2(q)$.  For $p_1\le X\le x_1(q)$ the instantaneous roof is $d_q$;
the difference $b_\eta-d_q$ has derivative
$-\cot\eta-\tan q<0$, so its positivity at $X=x_1(q)$ persists to the left.
For $X\ge x_1(q)$ the roof is $b_q$; now
$(b_\eta-b_q)'=\cot q-\cot\eta>0$, so the same positivity persists to the
right.  The endpoint $q=0$ follows by the corresponding limiting wall.  Hence
every early-tail roof lies below $b_\eta$ on $X\ge p_1$.

For $\phi\le t\le\eta$, Lemma~\ref{lem:critical} gives
$b_t(p_1)\le b_\eta(p_1)$, and
$(b_\eta-b_t)'=\cot t-\cot\eta\ge0$.  Since an instantaneous roof is at most
$b_t$, this comparison also holds for every $X\ge p_1$.

On $[\eta,T]$, the coefficient of $B'$ along $v_t$ is strictly negative, so
$B_1$ is strictly increasing.  For each
$X\in[B_1(\eta),B_1(T)]$ there is therefore a unique $r\in[\eta,T]$ with
$B_1(r)=X$.  Identity~\eqref{eq:envelopes} shows that $b_t(X)$ is maximized
on $[\eta,T]$ precisely at $t=r$.  Moreover, at $X=B_1(r)$ one has
\[
 d_r(X)-b_r(X)=-\frac{\alpha(r)}{\cos r}>0
 \qquad(\eta\le r<T),
\]
so the instantaneous roof there is $b_r(X)=B_2(r)$; the endpoint follows by
continuity.  The preceding paragraphs rule out a larger roof from $t<\eta$.
Thus the right part of the envelope is exactly $B|_{[\eta,T]}$.  Reflection
gives the left part $D|_{[0,\theta]}$.

Finally,
\[
\begin{aligned}
 B_1'&>0 &&(\eta<t<T),\\
 x_1'&=\alpha\cos t-\beta\sin t<0 &&(\phi<t<\tau),\\
 D_1'&>0 &&(0<t<\theta).
\end{aligned}
\]
Their horizontal ranges are consecutive and meet only at
$B(\eta)=x(\phi)$ and $D(\theta)=x(\tau)$.  Hence the three graph pieces have
no self-intersection.  Closing them by the base segment from $D(0)$ to $B(T)$
gives the counterclockwise Jordan curve \eqref{eq:nicheboundary}; its interior
is exactly the union of the vertically filled instantaneous wedges.
\end{proof}

\subsection{Inner-wall contacts and tangent directions}

\begin{theorem}[Rays, oriented tangents, and the corner convention]\label{thm:rays}
For $t\in[\eta,T]$ and $t\in[0,\theta]$, respectively,
\[
B(t)-x(t)=\alpha(t)v_t\in\R_{\ge0}(-v_t),
\qquad
D(t)-x(t)=-\beta(t)u_t\in\R_{\ge0}(-u_t).
\]
On every smooth phase,
\[
B'\parallel v_t,\quad B'\!\cdot v_t<0,
\qquad
D'\parallel u_t,\quad D'\!\cdot u_t>0.
\]
At $t=\tau$ for $B$ and $t=\phi$ for $D$, the two one-sided tangent vectors
are nonzero, positively collinear, and unequal.  Thus the oriented tangent
direction is well defined there, although the displayed parametrization is not
classically differentiable at those two points.  As a geometric arc, the curve admits a local regular $C^1$
reparametrization if a classical derivative is required; the hallway angle
$t$ itself is not such a parameter at the corner.
\end{theorem}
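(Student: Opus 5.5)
The plan is to read everything off the definitions \eqref{eq:contacts} and the phase table, using the certified signs $\alpha<0<\beta$ on $(0,T)$ quoted in the proof of Lemma~\ref{lem:core}.

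\emph{Rays.} By \eqref{eq:contacts}, $B(t)-x(t)=\alpha(t)v_t=(-\alpha(t))(-v_t)$. Since $\alpha\le0$ on $[\eta,T]$, this lies in $\R_{\ge0}(-v_t)$. At the endpoint $t=T$, phase~5 gives $\alpha(T)=0$ directly, or one can pass to the limit by continuity. The same argument with $\beta\ge0$ on $[0,\theta]$ places $D(t)-x(t)=-\beta(t)u_t$ in $\R_{\ge0}(-u_t)$.

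\emph{Tangent directions.} Differentiate $B=x+\alpha v_t$ using $x'=\alpha u_t+\beta v_t$ and $v_t'=-u_t$. The $u_t$ terms cancel, leaving $B'=(\alpha'+\beta)v_t$. Likewise $D'=(\alpha-\beta')u_t$. Substituting the phase formulas then gives the coefficients explicitly.
\begin{itemize}
\item On $I_4$ the coefficient of $B'$ is $d_1-1-\tfrac12t$; on $I_5$ it is $-\tfrac12$. Both appear in the proof of Lemma~\ref{lem:core}. On $I_3$ it is $\alpha'+\beta=-1+\beta$.
\item The $D$-coefficients follow by the reflection \eqref{eq:pathsymmetry}. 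Under $t\mapsto T-t$ the symmetry exchanges the roles of $B$ and $D$ together with $\alpha\leftrightarrow-\beta$, so it suffices to treat $B$.
\end{itemize}
Negativity of each coefficient is then a uniform interval check on the certified parameter boxes:
\begin{itemize}
\item on $I_4$, the worst case is at $t=\eta$ and needs $d_1<1+\eta/2$;
\item on $I_3\cap[\eta,T]$, which is only the point $t=\eta$, the one-sided value comes from phase~3.
\end{itemize}

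\emph{The corner at $t=\tau$.} Here $x'$ is continuous, but $\alpha'$ jumps because $x''$ is discontinuous. So the left and right limits of $B'$ are
\[
(d_1-1-\tfrac12\tau)v_\tau \quad\text{and}\quad -\tfrac12 v_\tau.
\]
Both are negative multiples of the same vector $v_\tau$. They are therefore nonzero and positively collinear.

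They are unequal exactly when $d_1-1-\tfrac12\tau\neq-\tfrac12$, that is, when $d_1\neq\tfrac12+\tfrac12\tau$. This follows by evaluating the certified enclosure of $d_1$ from Lemma~\ref{lem:kraw22} against the enclosure of $\tau=T-\phi$. The point $t=\phi$ for $D$ follows by reflection.

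\emph{Reparametrization and the main obstacle.} Since the unit tangent $-v_t$ is continuous in $t$, reparametrizing by arc length gives a regular $C^1$ arc. The speed, however, jumps, so $t$ itself is not a $C^1$ parameter.

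I expect the main obstacle to be bookkeeping rather than analysis. The sign conventions of the one-sided derivatives must be correct, including which phase governs each side at $\tau$ and at $\eta$. The inequality $d_1\neq(1+\tau)/2$ must also be confirmed with a genuine margin from the certified boxes. The first step is to compute $d_1=\tfrac\pi4-b_1$ from \eqref{eq:linear8} and compare it interval-wise.
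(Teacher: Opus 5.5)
Your proposal is correct and follows essentially the paper's route. You use the same identities $B'=(\beta+\alpha')v_t$ and $D'=(\alpha-\beta')u_t$, the same phase coefficients, and the same margin: your condition $d_1<1+\eta/2$ is exactly the mirror image of the paper's $1+b_1-\theta/2>0.1317$. The only difference is that you treat the corner explicitly for $B$ at $\tau$ and reflect to $D$ at $\phi$, whereas the paper does the reverse. There is one small slip: phase~5 does not give $\alpha(T)=0$. In fact $\alpha(T)=1-2a_1<0$ while $\beta(T)=0$ (it is $\alpha(0)$ that vanishes). Either your continuity alternative or this direct value still gives $\alpha(T)\le0$, so the ray claim is unaffected.
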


\begin{proof}
The ray assertions follow immediately from \eqref{eq:contacts} and
$\alpha<0<\beta$.  Differentiation gives
\[
 B'=(\beta+\alpha')v_t,
 \qquad D'=(\alpha-\beta')u_t.
\]
On $I_4,I_5$, the coefficient of $B'$ is, respectively,
$d_1-1-t/2$ and $-1/2$.  On $I_1,I_2$, the coefficient of $D'$ is,
respectively, $1/2$ and $1+b_1-t/2$.  The smallest certified positive margin
on the latter interval is
\[
 1+b_1-\theta/2>0.1317246472818.
\]
At $t=\phi$,
\[
D'_-(\phi)=\tfrac12u_\phi,
\qquad
D'_+(\phi)=\delta_\phi u_\phi,
\qquad
0.4527867195780<\delta_\phi<0.4527867195784.
\]
Reflection gives the corresponding two negative multiples of $v_\tau$ for
$B$ at $\tau$.  The one-sided vectors therefore determine the same oriented
line but have different speeds.  A monotone local change of parameter along the geometric arc can rescale the
two speeds so that the derivative is continuous and nonzero.
\end{proof}

\section{Certified admissible motion}

\subsection{The explicit supporting hallway}

For $K_0$ let
\[
L_t:=L_{K_0}(t),
\qquad
Q_t^\pm:=Q_{K_0}^\pm(t).
\]

\begin{lemma}[Explicit supporting hallway]\label{lem:hallway}
For every $t\in[0,T]$,
\begin{equation}\label{eq:explicit-hallway}
\boxed{L_t=x(t)+R_tL.}
\end{equation}
Moreover,
\begin{align}
Q_t^+
&=\{q:\ip{q-x(t)}{u_t}\le1,
          \ \ip{q-x(t)}{v_t}\le1\},\label{eq:Qtplus}\\
Q_t^-
&=\{q:\ip{q-x(t)}{u_t}<0,
          \ \ip{q-x(t)}{v_t}<0\}.\label{eq:Qtminus}
\end{align}
\end{lemma}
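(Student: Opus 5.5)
The plan is to compute the supporting map $f_{K_0,t}$ explicitly from Proposition~\ref{prop:cap} and then read off the images of $L$, $Q_L^\pm$. By definition \eqref{eq:supporting-map},
\[
f_{K_0,t}(p)=R_tp+(h_{K_0}(t)-1)u_t+(h_{K_0}(t+T)-1)v_t .
\]
Proposition~\ref{prop:cap} gives $h_{K_0}(t)-1=\ip{x(t)}{u_t}$ and $h_{K_0}(t+T)-1=\ip{x(t)}{v_t}$ for every $t\in[0,T]$. Since $(u_t,v_t)$ is an orthonormal basis, $\ip{x(t)}{u_t}u_t+\ip{x(t)}{v_t}v_t=x(t)$. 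Hence
\[
f_{K_0,t}(p)=x(t)+R_tp
\]
is an orientation-preserving rigid motion. Applying it to $L$ gives \eqref{eq:explicit-hallway} at once. The endpoints $t=0$ and $t=T$ need a brief check that the support identities of Proposition~\ref{prop:cap} are stated on the closed interval; they are, since $s$ ranges over $[0,T]$ there.

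For \eqref{eq:Qtplus} and \eqref{eq:Qtminus}, note that $p\in Q_L^+$ if and only if $\ip{p}{e_1}\le1$ and $\ip{p}{e_2}\le1$, and $p\in Q_L^-$ if and only if both coordinates are negative. Now write $q=x(t)+R_tp$. Then $p=R_{-t}(q-x(t))$, and since $R_t e_1=u_t$, $R_te_2=v_t$ and $R_t$ is orthogonal,
\[
\ip{p}{e_1}=\ip{q-x(t)}{u_t},\qquad \ip{p}{e_2}=\ip{q-x(t)}{v_t}.
\]
Substituting these into the two coordinate descriptions yields both displayed sets. The decomposition $L_t=Q_t^+\setminus Q_t^-$ then follows from \eqref{eq:hallway-quarterplanes}, because $f_{K_0,t}$ is a bijection and so commutes with set difference.

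I expect no serious obstacle. The only substantive input is the support-function identity, which is already proved in Proposition~\ref{prop:cap} via Lemmas~\ref{lem:support} and \ref{lem:grid}. The point deserving care is the convention for $h_K(t+T)$: the supporting map uses the support in direction $u_{t+T}=v_t$, and this matches the second identity of Proposition~\ref{prop:cap}, which is stated with $v_s$. I would state this identification explicitly so that the wall orientations are unambiguous.
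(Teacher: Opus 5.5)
Your proposal is correct and follows essentially the same route as the paper. Both substitute the support identities of Proposition~\ref{prop:cap} into \eqref{eq:supporting-map}, expand $x(t)$ in the orthonormal basis $(u_t,v_t)$ to obtain $f_{K_0,t}(p)=x(t)+R_tp$, and transport $L$ and $Q_L^\pm$ using $R_te_1=u_t$ and $R_te_2=v_t$; your extra remarks on the closed interval $[0,T]$, on $u_{t+T}=v_t$, and on bijectivity preserving set difference are accurate and make explicit what the paper leaves implicit.
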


\begin{proof}
By Proposition~\ref{prop:cap},
\[
h_{K_0}(t)-1=\ip{x(t)}{u_t},
\qquad
h_{K_0}(t+T)-1=\ip{x(t)}{v_t}.
\]
Since $(u_t,v_t)$ is an orthonormal basis,
\[
(h_{K_0}(t)-1)u_t+(h_{K_0}(t+T)-1)v_t
=\ip{x(t)}{u_t}u_t+\ip{x(t)}{v_t}v_t=x(t).
\]
Substitution in \eqref{eq:supporting-map} gives
$f_{K_0,t}(p)=x(t)+R_tp$, proving \eqref{eq:explicit-hallway}.  Applying this
map to the two quarter-planes in \eqref{eq:hallway-quarterplanes}, and using
$R_te_1=u_t$ and $R_te_2=v_t$, gives \eqref{eq:Qtplus}--\eqref{eq:Qtminus}.
\end{proof}

\subsection{The fixed set and its topology}

Retain the set
\begin{equation}\label{eq:G}
G=K_0\setminus\mathcal N(K_0).
\end{equation}

\begin{lemma}[Nonemptiness, closedness, and connectedness]\label{lem:Gtopology}
The set $G$ is nonempty, closed, and connected.
\end{lemma}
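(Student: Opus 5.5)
The plan is to treat the three properties separately: nonemptiness and closedness come directly from the cap and niche descriptions already certified, and connectedness follows from Proposition~\ref{prop:cap}. Since the abstract says these properties are proved directly, I will also give a self-contained connectedness argument based on Theorem~\ref{thm:niche}, so the conclusion does not rest only on Baek's general criterion.

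\emph{Nonemptiness.} By Lemma~\ref{lem:support}, the curve $A$ lies in $K_0$. I will show that $A(T)=(a_1+\kappa_{5,1},1)$ is not in $\mathcal N(K_0)$. By Lemma~\ref{lem:hallway}, a point of the niche lies in some $Q_t^-$ with $0<t<T$, so it satisfies $\ip{q-x(t)}{v_t}<0$. At $q=A(T)$ the second coordinate equals $1$. Lemma~\ref{lem:support} gives $\ip{A(T)}{v_t}\ge\ip{C(t)}{v_t}=\ip{x(t)}{v_t}+1$, so $\ip{A(T)-x(t)}{v_t}\ge1>0$. Hence $A(T)\notin Q_t^-$ for every $t$, and therefore $A(T)\in G$.

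\emph{Closedness.} $K_0$ is closed by Proposition~\ref{prop:cap}, so it suffices to show that $\mathcal N(K_0)$ is relatively open in $K_0$. Each $Q_t^-$ is open by \eqref{eq:Qtminus}, so the union $\bigcup_{0<t<T}Q_t^-$ is open. Intersecting with $F_T$ gives a set that is open relative to $F_T\supset K_0$. Then
\[
G=K_0\setminus\bigcup_{0<t<T}Q_t^-
\]
is a closed set minus an open set, and so it is closed.

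\emph{Connectedness.} Proposition~\ref{prop:cap}, through Baek's Theorem~2.5.8, already gives this. For a direct argument I would use Theorem~\ref{thm:niche}. The niche is the region lying above the base segment $[D(0),B(T)]$ on $q_2=0$ and below the roof curve $\Gamma=B^{\rev}*x*D^{\rev}$, which is an $X$-monotone graph over $[D_1(0),B_1(T)]$. Write the roof as $q_2=g(q_1)$. Every vertical section of $K_0$ is a segment $[0,m(q_1)]$, because $K_0$ is convex and contains the base. Removing the niche therefore leaves, in each column, either the full segment $[0,m(q_1)]$ or the segment $[g(q_1),m(q_1)]$ when $q_1$ lies over the niche. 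Each such section is nonempty, since $g\le m$ because the roof curves lie in $K_0$ by the containment in Proposition~\ref{prop:cap}. Each section also contains its top point $(q_1,m(q_1))$, which is on the upper boundary of $K_0$.

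The union of these top points is the graph of the concave, hence continuous, function $m$, and that graph is connected. Every point of $G$ is joined to it by a vertical segment inside $G$, so $G$ is path-connected.

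The main obstacle is the claim that each niche section is exactly an open-at-top interval $[0,g(q_1))$ intersected with $K_0$. Openness must be handled carefully, since the roof points themselves belong to $G$. This requires the "vertically filled instantaneous wedges" description at the end of the proof of Theorem~\ref{thm:niche}. I would verify it using the fact that each $Q_t^-\cap F_T$ has vertical sections of the form $[0,\min\{b_t,d_t\})$. The union over $t$ then has sections $[0,\sup_t\min\{b_t,d_t\})$, with the supremum attained, by the envelope identification in Theorem~\ref{thm:niche}.
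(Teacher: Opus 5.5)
Your closedness argument is the paper's own, and so is citing Proposition~\ref{prop:cap} for connectedness. Your direct column argument is the mechanism of Remark~\ref{rem:connectedness}.

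\textbf{The gap is in the nonemptiness step.} Since $A(T)\in K_0$ and $C(t)$ attains $h_{K_0}(t+T)$, Lemma~\ref{lem:support} gives $\ip{A(T)}{v_t}\le\ip{C(t)}{v_t}$, not $\ge$. In fact its proof shows $\ip{A(T)}{v_t}=\ip{C(0)}{v_t}-L\sin t<\ip{C(t)}{v_t}$ for $t>0$. So your conclusion $\ip{A(T)-x(t)}{v_t}\ge1$ is false.

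The $v_t$-coordinate alone cannot exclude $A(T)$ from the niche. From $A(T)-x(T)=(2a_1-1,1)$ one gets
\[
\ip{A(T)-x(T)}{v_T}=\alpha(T)=1-2a_1<-1.42 .
\]
By continuity, $\ip{A(T)-x(t)}{v_t}<0$ for all $t$ near $T$. For those times only the $u_t$-coordinate keeps $A(T)$ out of $Q_t^-$.

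\textbf{The repair is short and keeps your witness.} For $0<t<T$ write $e_2=\sin t\,u_t+\cos t\,v_t$. Then
\[
\sin t\,\ip{A(T)-x(t)}{u_t}+\cos t\,\ip{A(T)-x(t)}{v_t}=1-x_2(t)\ge0 .
\]
The inequality holds because $x(t)\in K_0$ by Lemma~\ref{lem:grid}, and the $s=0$ constraint in \eqref{eq:K0} gives $q_2\le1$ on $K_0$. Since $\sin t,\cos t>0$, the two inner products cannot both be negative. Hence $A(T)\notin Q_t^-$ and $A(T)\in G$. More generally, every $Q_t^-$ with $0<t<T$ lies strictly below the line $q_2=x_2(t)\le1$, so the whole top segment lies in $G$. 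This witness is self-contained, whereas the paper uses $A(0)=(1,0)$ together with Baek's Lemma~2.5.7.

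The same observation dissolves the ``main obstacle'' in your connectedness sketch. Each $Q_t^-\cap F_T$ is downward closed within $F_T$: moving down decreases both $\ip{\cdot}{u_t}$ and $\ip{\cdot}{v_t}$. A union of sections of the form $[0,c_t)$ is $[0,\sup_t c_t)$ whether or not the supremum is attained. The inclusion $\mathcal N(K_0)\subset K_0$ then forces $\sup_t c_t\le m(q_1)$. So neither attainment nor the envelope identification of Theorem~\ref{thm:niche} is needed.
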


\begin{proof}
Connectedness is part of Proposition~\ref{prop:cap}.  For nonemptiness, the
general endpoint lemma for caps, Lemma 2.5.7 of \cite{Baek}, places the lower
endpoint of the support face at angle zero in
$K_0\setminus\mathcal N(K_0)$.  In the present normalization this point is
$A(0)=(1,0)$ by Lemma~\ref{lem:support}.

For closedness put
\[
U=\bigcup_{0<t<T}Q_t^-.
\]
Every $Q_t^-$ is open by \eqref{eq:Qtminus}, hence $U$ is open.  Since
$K_0\subset F_T$, the niche definition \eqref{eq:general-niche} gives
\[
G=K_0\setminus(F_T\cap U)=K_0\setminus U
  =K_0\cap(\R^2\setminus U).
\]
The cap $K_0$ and the complement of $U$ are closed, so $G$ is closed.
\end{proof}

\begin{remark}[Why the niche does not disconnect the sofa]\label{rem:connectedness}
The mechanism behind the general connectedness theorem is especially simple
here.  The upper boundary of the compact convex cap is connected and lies
outside the niche.  From any point of $G$, the vertical segment upward through
the corresponding cap section reaches that boundary.  It cannot enter the
niche: every inner quadrant $Q_t^-$ is downward closed within $F_T$, so if a
higher point of the segment belonged to $Q_t^-$, then the original point would
belong to the same quadrant.  The certified Jordan description in
Theorem~\ref{thm:niche} independently rules out hidden niche components, but
is not needed for the basic continuum-containment argument below.
\end{remark}

\subsection{Full continuum containment}

\begin{lemma}[Containment in every supporting hallway]\label{lem:containment}
For every $t\in[0,T]$,
\begin{equation}\label{eq:G-in-Lt}
G\subseteq L_t.
\end{equation}
\end{lemma}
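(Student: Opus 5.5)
The containment will follow directly from the quarter-plane description $L_t=Q_t^+\setminus Q_t^-$ in Lemma~\ref{lem:hallway}. Since $f_{K_0,t}$ is a bijective rigid map, \eqref{eq:hallway-quarterplanes} transports to $L_t=Q_t^+\setminus Q_t^-$. It therefore suffices to show two inclusions for every $t\in[0,T]$:
\[
G\subseteq Q_t^+ \qquad\text{and}\qquad G\cap Q_t^-=\emptyset .
\]

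For the first inclusion I will simply use $G\subseteq K_0$. By \eqref{eq:K0} with $s=t$, every $q\in K_0$ satisfies
\[
\ip{q-x(t)}{u_t}\le 1,\qquad \ip{q-x(t)}{v_t}\le1 .
\]
This is exactly the description \eqref{eq:Qtplus} of $Q_t^+$. Hence $K_0\subseteq Q_t^+$ for all $t\in[0,T]$, including the endpoints.

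For the second inclusion, take first $0<t<T$. Here $Q_t^-$ is one of the sets in the union defining $\mathcal N(K_0)$ in \eqref{eq:general-niche}. If $q\in G$, then $q\in K_0\subseteq F_T$. So if also $q\in Q_t^-$, we would have $q\in F_T\cap Q_t^-\subseteq\mathcal N(K_0)$, contradicting $q\notin\mathcal N(K_0)$.

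The endpoint angles $t=0$ and $t=T$ are the only genuinely delicate step, because the niche union is taken over the open interval $0<t<T$. I will not use a limiting argument here, since an open quadrant need not be the limit of its neighbours. Instead I will check the endpoints directly from the normalization.

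At $t=0$ we have $x(0)=(0,0)$, $u_0=e_1$ and $v_0=e_2$. Then \eqref{eq:Qtminus} gives $Q_0^-=\{q_1<0,\ q_2<0\}$. This set is disjoint from $K_0\subseteq\{q_2\ge0\}$, so $G\cap Q_0^-=\emptyset$.

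At $t=T$ we have $u_T=e_2$, $v_T=-e_1$, and $x_2(T)=0$ by \eqref{eq:k52} together with the endpoint formula. Then
\[
Q_T^-=\{q_2<0,\ q_1>x_1(T)\},
\]
which again lies in $\{q_2<0\}$ and is disjoint from $K_0$.

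Combining the two inclusions gives $G\subseteq Q_t^+\setminus Q_t^-=L_t$ for every $t\in[0,T]$. Note that the argument uses only Lemma~\ref{lem:hallway}, the definition of $K_0$, and the definition of the niche. It does not need the Jordan description in Theorem~\ref{thm:niche} or any additional interval computation.
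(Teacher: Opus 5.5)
Your argument is correct and is essentially the paper's proof: $K_0\subseteq Q_t^+$ comes directly from \eqref{eq:K0}, $Q_t^-$ is excluded for $0<t<T$ by the niche definition, and at $t=0$ and $t=T$ you check separately that $Q_0^-$ and $Q_T^-$ lie in $\{q_2<0\}$, using $x(0)=0$ and $x_2(T)=0$. One cosmetic point: the paper obtains $x_2(T)=0$ first (from \eqref{eq:derivativesymmetry} and $x(0)=0$) and deduces \eqref{eq:k52} from it, so it is cleaner to cite that fact directly rather than going back through \eqref{eq:k52}.
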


\begin{proof}
The two inequalities in the definition \eqref{eq:K0} are precisely the two
conditions in \eqref{eq:Qtplus}, so
\[
G\subseteq K_0\subseteq Q_t^+
\qquad(0\le t\le T).
\]
For $0<t<T$, the cap lies in $F_T$, and the niche definition gives
\[
K_0\cap Q_t^-\subseteq\mathcal N(K_0).
\]
Hence $G\cap Q_t^-=\varnothing$ for every interior time.

The endpoint quadrants are excluded by the base constraint.  At $t=0$,
membership in $Q_0^-$ would imply
$\ip{q-x(0)}{v_0}=q_2<0$, because $x(0)=0$.  At $t=T$, it would imply
$\ip{q-x(T)}{u_T}=q_2-x_2(T)=q_2<0$, because $x_2(T)=0$.  Both contradict
$q_2\ge0$ on $K_0$.  Thus $G$ meets no $Q_t^-$, including at the endpoints,
and \eqref{eq:hallway-quarterplanes} and Lemma~\ref{lem:hallway} give
\eqref{eq:G-in-Lt}.
\end{proof}

\subsection{The rigid-motion path and endpoint arms}

\begin{lemma}[Inverse rigid motion]\label{lem:rigid-motion}
Define
\begin{equation}\label{eq:motion-Phi}
\Phi_t(q)=R_{-t}(q-x(t))
=\begin{pmatrix}\cos t&\sin t\\-\sin t&\cos t\end{pmatrix}q
 -R_{-t}x(t).
\end{equation}
Then $\Phi_t\in\mathrm{SE}(2)$, the map
$t\mapsto\Phi_t$ is continuous, and
\begin{equation}\label{eq:equivalence}
q\in L_t\quad\Longleftrightarrow\quad\Phi_t(q)\in L.
\end{equation}
Moreover,
\[
\Phi_0(G)\subset H_L,
\qquad
\Phi_T(G)\subset V_L.
\]
\end{lemma}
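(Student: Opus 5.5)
The plan is to split the statement into three parts: membership of $\Phi_t$ in $\mathrm{SE}(2)$ with continuity in $t$, the equivalence \eqref{eq:equivalence}, and the two endpoint arm inclusions.

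\emph{Rigid motion and continuity.} For each $t$, $\Phi_t$ is the composition of the translation by $-x(t)$ with the rotation $R_{-t}$. Since $\det R_{-t}=1$, this is an orientation-preserving rigid motion. The linear part $t\mapsto R_{-t}$ is continuous. The translation part is $t\mapsto -R_{-t}x(t)$, and $x$ is continuous (indeed $C^1$ after the matching established in Section~3), so this part is continuous too. Hence $t\mapsto\Phi_t$ is continuous into $\mathrm{SE}(2)$ with its usual topology.

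\emph{Equivalence.} I would note that $\Phi_t$ is the inverse of the affine map $p\mapsto x(t)+R_tp$, because $R_{-t}=R_t^{-1}$. Lemma~\ref{lem:hallway} then gives
\[
q\in L_t=x(t)+R_tL \iff R_{-t}(q-x(t))\in L,
\]
which is exactly \eqref{eq:equivalence}.

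\emph{Endpoint arms.} This is the step needing care. By Lemma~\ref{lem:containment} and \eqref{eq:equivalence}, $\Phi_0(G)\subset L$ and $\Phi_T(G)\subset L$. Since $L=H_L\cup V_L$, I still need to rule out the part of each image lying in the other arm.

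For $t=0$ we have $x(0)=0$, so $\Phi_0(q)=q$. Every $q\in G\subset K_0$ satisfies $0\le q_2$ by the base constraint and $q_2\le1$ by the $v_0$-constraint $\ip{q}{v_0}\le\ip{x(0)}{v_0}+1=1$. The $u_0$-constraint gives $q_1\le1$. Thus $q\in(-\infty,1]\times[0,1]=H_L$. Moreover $\Phi_0$ is the identity, so in particular a translation, as Definition~\ref{def:moving-sofa} requires.

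For $t=T$ we have $R_{-T}(y_1,y_2)=(y_2,-y_1)$ and $x(T)=(\ell,0)$ by \eqref{eq:pathsymmetry} and $x_2(T)=0$. Hence
\[
\Phi_T(q)=(q_2,\;\ell-q_1).
\]
The first coordinate $q_2$ lies in $[0,1]$ by the base constraint together with the constraint at $s=T$ in the direction $u_T=(0,1)$, which reads $q_2\le x_2(T)+1=1$. For the second coordinate, the constraint with $v_T=(-1,0)$ gives $-q_1\le-\ell+1$, i.e.\ $\ell-q_1\le1$. So $\Phi_T(q)\in[0,1]\times(-\infty,1]=V_L$.

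I expect no real obstacle here. The only point to verify carefully is the sign and orientation bookkeeping for $R_{-T}$ and $v_T$, and that $x_1(T)=\ell$ is the constant appearing in \eqref{eq:pathsymmetry}.
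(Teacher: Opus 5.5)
Your proposal is correct and follows essentially the same route as the paper. You get $\mathrm{SE}(2)$ membership and continuity from the $C^1$ path, the equivalence by inverting $p\mapsto x(t)+R_tp$ via Lemma~\ref{lem:hallway}, and both arm inclusions from the $s=0$ and $s=T$ constraints of \eqref{eq:K0} together with the base constraint and $x(0)=0$, $x_2(T)=0$. The differences are cosmetic: you write $\Phi_T(q)=(q_2,\ell-q_1)$ in coordinates where the paper reads off $\ip{q-x(T)}{u_T}$ and $\ip{q-x(T)}{v_T}$, and your preliminary appeal to Lemma~\ref{lem:containment} is harmless but never used.
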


\begin{proof}
The linear part $R_{-t}$ is orthogonal with determinant one, and the
translation vector is $-R_{-t}x(t)$, so $\Phi_t\in\mathrm{SE}(2)$.  Since the
five pieces of $x$ join $C^1$ by Lemma~\ref{lem:kraw22}, both the rotation and
translation parts depend continuously on $t$.  The equivalence follows
algebraically from Lemma~\ref{lem:hallway}:
\[
q\in x(t)+R_tL
\Longleftrightarrow q-x(t)\in R_tL
\Longleftrightarrow R_{-t}(q-x(t))\in L.
\]

At $t=0$, one has $x(0)=0$ and $R_0=I$.  The constraints defining $K_0$ at
$s=0$, together with the base constraint, give
$q_1\le1$ and $0\le q_2\le1$; hence $\Phi_0(G)=G\subset H_L$.

At $t=T$, put $p=\Phi_T(q)$.  In the standard basis,
\[
p_1=\ip{q-x(T)}{u_T}=q_2,
\qquad
p_2=\ip{q-x(T)}{v_T}.
\]
The base constraint, $x_2(T)=0$, and the two support inequalities at $T$ give
$0\le p_1\le1$ and $p_2\le1$.  Therefore $p\in V_L$.
\end{proof}

\begin{remark}[Contact-curve speeds versus motion continuity]
The unequal one-sided tangent speeds of $B$ at $t=\tau$ and of $D$ at
$t=\phi$ do not affect admissibility.  The rigid-motion path
\eqref{eq:motion-Phi} is defined by the certified $C^1$ corner path $x(t)$, not by a
parametrization of either contact curve.
\end{remark}

\begin{theorem}[Certified admissible motion of Gerver's sofa]\label{thm:admissible}
Let $(y,\phi,\theta)$ be the unique zero in the explicit box certified by
Lemma~\ref{lem:kraw22}, let $x$ be the corresponding five-phase path, let
$K_0$ be the cap \eqref{eq:K0}, and let
$G=K_0\setminus\mathcal N(K_0)$.  Then $G$ is a nonempty, closed, connected
planar set.  The maps
\[
\Phi_t(q)=R_{-t}(q-x(t)),
\qquad 0\le t\le\frac\pi2,
\]
form a continuous path in $\mathrm{SE}(2)$ satisfying
\[
\Phi_0(G)\subset H_L,
\qquad
\Phi_t(G)\subset L\quad(0\le t\le\tfrac\pi2),
\qquad
\Phi_{\pi/2}(G)\subset V_L.
\]
Consequently $G$ is a moving sofa in the sense of
Definition~\ref{def:moving-sofa}.
\end{theorem}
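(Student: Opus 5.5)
The theorem is essentially an assembly of the preceding lemmas, so I will verify the clauses of Definition~\ref{def:moving-sofa} in order and cite the relevant result for each. The one point needing real care is the correctness of the data that all of those results rest on.

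\emph{Fixing the data.} By Lemma~\ref{lem:kraw22} there is a unique zero $(y,\phi,\theta)$ in $Z$. Its angle coordinates lie in $\Phi\times\Theta$. The consequences already derived from \eqref{eq:linear8}, namely \eqref{eq:derivativesymmetry}, \eqref{eq:k52} and \eqref{eq:pathsymmetry}, show that the five phases of $x$ join $C^1$ with $x(0)=0$ and $x_2(T)=0$. The subtle point is that Lemmas~\ref{lem:support} and \ref{lem:grid} and Proposition~\ref{prop:cap} were proved by interval evaluation uniformly over the certified boxes. I will therefore state explicitly that they apply to this particular zero. This is the step I regard as the main obstacle. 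It is not computationally hard, but it is where the logic could silently break: one must check that every certified sign and margin was computed on boxes containing the true parameters, and that the enclosure of $\pi$ contains the actual value of $\pi$. Granting this, Proposition~\ref{prop:cap} gives the support identities for $K_0$, and Lemma~\ref{lem:hallway} then gives $L_t=x(t)+R_tL$ for all $t\in[0,T]$.

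\emph{Topology of $G$.} Lemma~\ref{lem:Gtopology} gives directly that $G$ is nonempty, closed and connected. Nonemptiness comes from $A(0)=(1,0)\in G$. Closedness holds because $G=K_0\setminus U$ with $U$ open. Connectedness is taken from Proposition~\ref{prop:cap}, and Remark~\ref{rem:connectedness} gives an elementary version of the same argument.

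\emph{Path and containment.} Lemma~\ref{lem:rigid-motion} shows that $t\mapsto\Phi_t$ is a continuous path in $\mathrm{SE}(2)$ and that $q\in L_t$ if and only if $\Phi_t(q)\in L$. Combining this with Lemma~\ref{lem:containment}, which gives $G\subseteq L_t$ for every $t\in[0,T]$, yields $\Phi_t(G)\subset L$ for all $t$. The endpoint conditions $\Phi_0(G)\subset H_L$ and $\Phi_T(G)\subset V_L$ are the last assertions of Lemma~\ref{lem:rigid-motion}. Since $x(0)=0$ and $R_0=I$, the map $\Phi_0$ is the identity, and in particular a translation, as Definition~\ref{def:moving-sofa} requires.

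\emph{Conclusion.} Reparametrizing linearly by $s=2t/\pi$ gives a path $\Psi_s=\Phi_{\pi s/2}$ on $[0,1]$ that satisfies every clause of Definition~\ref{def:moving-sofa}. Hence $G$ is a moving sofa.
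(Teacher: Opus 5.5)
Your proposal is correct and is essentially the paper's own proof. It uses Lemma~\ref{lem:Gtopology} for the topology of $G$, Lemma~\ref{lem:containment} with the equivalence \eqref{eq:equivalence} for containment at every time, Lemma~\ref{lem:rigid-motion} for continuity and both arm conditions, and a final linear reparametrization. Your remark that $\Phi_0$ is the identity, so that the translation clause of Definition~\ref{def:moving-sofa} holds, makes explicit something the paper leaves implicit. One small imprecision: the $C^1$ joining of the phases comes from the matching equations \eqref{eq:matches} together with the symmetry, not from \eqref{eq:linear8} alone.
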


\begin{proof}
Lemma~\ref{lem:Gtopology} gives the required set-theoretic properties.
Lemma~\ref{lem:containment}, together with
\eqref{eq:equivalence}, gives $\Phi_t(G)\subset L$ for the full continuum of
times.  Lemma~\ref{lem:rigid-motion} supplies continuity, membership in
$\mathrm{SE}(2)$, and both endpoint arm conditions.  Reparametrizing
$[0,T]$ linearly onto $[0,1]$ completes the definition.
\end{proof}

\section{Identification with Gerver's classical sofa}

Romik associates with a rotation path $x$ the fixed set
\begin{equation}\label{eq:Sx}
S_x=H_L\cap\bigcap_{0\le t\le T}\bigl(x(t)+R_tL\bigr)
       \cap\bigl(x(T)+R_TV_L\bigr).
\end{equation}
This is the set of points that remain in the initial horizontal arm, in every
intermediate hallway, and in the final vertical arm in the sofa-fixed frame
\cite{Romik}.

\begin{proposition}[Identification with Gerver's construction]\label{prop:gerver}
For the certified five-phase path,
\begin{equation}\label{eq:G=Sx}
G=S_x.
\end{equation}
The boundary has the classical eighteen-piece decomposition: fifteen
nonconstant analytic phase arcs and three line segments.  Hence $G$ is
Romik's reconstructed representative of Gerver's classical sofa.
\end{proposition}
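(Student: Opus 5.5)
The plan is to prove the two inclusions $G\subseteq S_x$ and $S_x\subseteq G$ directly from Lemma~\ref{lem:hallway} and the definitions, and then read off the boundary from Theorem~\ref{thm:niche}.

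For $G\subseteq S_x$, Lemma~\ref{lem:containment} gives $G\subseteq x(t)+R_tL$ for every $t\in[0,T]$. Lemma~\ref{lem:rigid-motion} gives $\Phi_0(G)=G\subset H_L$ and $\Phi_T(G)\subset V_L$. The latter is equivalent to $G\subset x(T)+R_TV_L$, so $G\subseteq S_x$.

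For the reverse inclusion, take $q\in S_x$.

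\emph{Step 1: $q\in K_0$.} Since $q\in x(s)+R_sL\subset x(s)+R_sQ_L^+$, the description \eqref{eq:Qtplus} gives both half-plane inequalities of \eqref{eq:K0} at every $s$. The base constraint $q_2\ge0$ comes from $q\in H_L$.

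\emph{Step 2: $q\notin\mathcal N(K_0)$.} Because $q\in L_t=Q_t^+\setminus Q_t^-$ for every interior $t$, the point $q$ lies in no $Q_t^-$ with $0<t<T$. By \eqref{eq:general-niche} it is therefore not in the niche.

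Hence $q\in G$, which proves \eqref{eq:G=Sx}. The inclusion in the initial and final arms is automatic once $q\in K_0$, so the arm factors in \eqref{eq:Sx} are not needed for this direction. I expect this set-theoretic part to be short.

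\emph{Step 3: the boundary.} Here I would write $\partial G$ as the outer cap boundary together with the niche boundary. The niche boundary is given by the Jordan description of Theorem~\ref{thm:niche}, minus the base segment $[D(0),B(T)]$, which is interior to the bottom edge. The outer boundary of $K_0$ is traced by the support curves $A$ and $C$ of Lemma~\ref{lem:support}, together with the bottom segment and the flat faces.

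Counting pieces means splitting each curve at the phase boundaries $\phi,\theta,\eta,\tau$:
\begin{itemize}
\item $A$ is constant where $\lambda_A=0$, namely on $I_1$. Its nonconstant arcs come from $I_2,\dots,I_5$, and the degenerate phase yields a corner point.
\item Symmetrically, $C$ is constant on $I_5$ and has nonconstant arcs on $I_1,\dots,I_4$.
\item The niche contributes $B|_{[\eta,\tau]}$ and $B|_{[\tau,T]}$, the path $x$ restricted to $[\phi,\theta]$, $[\theta,\eta]$ and $[\eta,\tau]$, and $D|_{[0,\phi]}$ and $D|_{[\phi,\theta]}$.
\item The segments are the top face at height one (the $L$-segment \eqref{eq:L} from $C(0)$ to $A(T)$) and the two bottom segments $[C(T),D(0)]$ and $[B(T),A(0)]$.
\end{itemize}
This gives $4+4+7=15$ analytic arcs and $3$ segments.

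\emph{Main obstacle.} The hard part is making this count rigorous: showing that every listed arc is genuinely nonconstant and non-collinear with its neighbours, so that no two pieces merge. I would handle this with the certified nonzero coefficients $\lambda_A$ and $\lambda_C$, the coefficients of $B'$ and $D'$ from Theorem~\ref{thm:rays}, and $\det(x',x'')>0$ from Lemma~\ref{lem:core}. I would also need to check that the outer arcs $A$ and $C$ do not meet the niche except at the base, which follows since they lie on the supporting lines with $\ip{\cdot-x(t)}{u_t}=1$. Matching the resulting list to Romik's enumeration then identifies $G$ with Gerver's sofa.
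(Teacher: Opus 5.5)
Your proposal is correct and follows essentially the same route as the paper: both inclusions come from Lemmas~\ref{lem:hallway}, \ref{lem:containment} and~\ref{lem:rigid-motion} together with the quarter-plane descriptions, exactly as you describe, and the eighteen pieces are counted identically (four arcs of $A$, four of $C$, two of $B$, three of $x$, two of $D$, plus the top segment and the two outer base segments). Two small corrections: lying on the outer wall at angle $t$ only keeps $A(t)$ out of $Q_t^-$, so the disjointness of $A$, $C$ and the top segment from the niche should instead be read off from Theorem~\ref{thm:niche} (e.g.\ $A_1\ge A_1(T)=B_1(T)$, $C_1\le C_1(0)=D_1(0)$, and the niche stays below height one), and your remark that the arm factors of $S_x$ are unused in the reverse direction conflicts with Step~1, which uses $q\in H_L$ to obtain $q_2\ge0$.
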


\begin{proof}
The inclusion $G\subseteq S_x$ follows from
Theorem~\ref{thm:admissible} and its endpoint estimates, interpreted in the
sofa-fixed frame.  Conversely, let $q\in S_x$.  Membership in $H_L$ gives
$q_2\ge0$.  Membership in every $x(t)+R_tL$ gives both outer inequalities in
\eqref{eq:Qtplus}, so $q\in K_0$, and excludes every $Q_t^-$ for
$0<t<T$.  Thus $q\notin\mathcal N(K_0)$ and $q\in G$.  This proves
\eqref{eq:G=Sx}.

The certified path is the solution in the explicit Krawczyk box of Romik's
independent five-phase matching and contact system.  The outer cap boundary is
traced by the support curves $A$ and $C$, together with the top segment and
the base.  Theorem~\ref{thm:niche} identifies the removed niche boundary as
the stated pieces of $B$, $x$, and $D$ and excludes hidden components or
crossings.  The nonconstant analytic pieces are: four phase pieces of $A$
(on $I_2$--$I_5$), four of $C$ (on $I_1$--$I_4$), two of $B$, two of $D$,
and three of the core path $x$, for a total of fifteen.  The remaining three
pieces are the top segment and the two outer portions of the bottom segment.
This is precisely the eighteen-piece boundary decomposition in Romik's
reconstruction of Gerver's construction.
\end{proof}

\begin{remark}[Scope of uniqueness]
The Krawczyk calculation proves existence and uniqueness of the full system in
the explicit rational box used in Lemma~\ref{lem:kraw22}.  The identification
above does not assert global uniqueness of all real zeros of Romik's system;
it identifies the certified solution in the standard Gerver parameter box and
its associated fixed set.
\end{remark}
\section{Additional certified geometric properties}

\begin{proposition}[Baek's injectivity condition]\label{prop:injectivity}
The cap $K_0$ satisfies Baek's injectivity condition.  More explicitly, its
surface-area measure has nonnegative densities on the two open semicircles,
$x$ is $C^1$, and
\[
 x'(t)\cdot u_t=\alpha(t)<0,
 \qquad
 x'(t)\cdot v_t=\beta(t)>0
 \quad(0<t<T).
\]
\end{proposition}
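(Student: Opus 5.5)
The plan has three parts. First we establish the strict signs of $\alpha$ and $\beta$ phase by phase. Then we express the densities of the surface-area measure of $K_0$ through the support functions of Proposition~\ref{prop:cap}. Finally we match the result against Baek's formulation of injectivity.

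\textbf{Step 1: signs of $\alpha$ and $\beta$.} Write $x=R_tz_j+\kappa_j$. Then $x'=R_t(z_j'+Jz_j)$, where $J$ is rotation by $\pi/2$. This gives $(\alpha,\beta)=z_j'+Jz_j$ as explicit functions on each $I_j$.
\begin{itemize}
\item On $I_3$ we get $\alpha=-1-c_2-t$ and $\beta=1+c_1-t$. Using $c_2=c_1-T$, this yields $\alpha+\beta=T-2t$, which is the identity already used in Lemma~\ref{lem:grid}.
\item On $I_2$ and $I_4$, $\alpha$ and $\beta$ are quadratic polynomials in $t$.
\item On $I_1$ and $I_5$ they are affine combinations of $\cos t$ and $\sin t$.
\end{itemize}
In each case we certify $\alpha<0<\beta$ by exact interval evaluation over the certified boxes $Y\times\Phi\times\Theta$. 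Monotonicity or derivative bounds handle the continuum, as in Lemma~\ref{lem:critical}. By the symmetry \eqref{eq:derivativesymmetry}, $\alpha(T-t)=-\beta(t)$, so it suffices to treat $[0,T/2]$ and reflect.

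The main obstacle is the endpoints $t\to0^+$ and $t\to T^-$. There $\alpha(0)$ or $\beta(T)$ might vanish. We will compute the values $\alpha(0)=-a_2\cdot(\ldots)$ exactly from $a_2=-\tfrac14$, and then use a one-sided derivative sign near $0$ to obtain strict negativity on $(0,\phi]$. The same sign is the one that Lemma~\ref{lem:core} already asserts; we will simply make that certification explicit here.

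\textbf{Step 2: densities of the surface-area measure.} On the open semicircle of angles $(0,T)$, the support function is $h(s)=\ip{x(s)}{u_s}+1$. Since $x$ is $C^1$ and piecewise analytic, $h+h''$ equals the radius-of-curvature density, and $h+h''=\lambda_A$ by differentiating $A$ (Lemma~\ref{lem:support}). Likewise, on $(T,\pi)$ the density equals $-\lambda_C$. Both are nonnegative by the table of Lemma~\ref{lem:support}. The junction points carry no atoms because $x$ is $C^1$ there. The remaining angles contribute atoms at $0$, $T$ and $\pi$ only, corresponding to the flat faces.

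\textbf{Step 3: conclusion.} The inner corner is $x$ by Proposition~\ref{prop:cap}, and $x$ is $C^1$ by Lemma~\ref{lem:kraw22} together with the symmetry argument. Combining this with Steps 1 and 2 gives exactly the hypotheses of Baek's injectivity definition, and hence the proposition.
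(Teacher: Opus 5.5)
Your route is essentially the paper's. You certify $\alpha<0<\beta$ phasewise, identify the densities with $\lambda_A$ and $-\lambda_C$, and take $C^1$ from Lemma~\ref{lem:kraw22}. Your computation $h+h''=1+\alpha'+\beta=\lambda_A$ is exactly the differential Gauss--Minkowski step the paper cites.

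Your handling of $t\to0^+$ is more explicit than the paper's ``certified phasewise'', and it is correct. One fix to your garbled formula: $\alpha(0)=2a_2+\tfrac12=0$ exactly. On $I_1$ one has $\alpha(t)=\sin t\,\bigl(\tfrac12\tan\tfrac t2-2a_1\bigr)<0$ for $0<t\le\phi$, which settles that endpoint at once.

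The genuine error is the last sentence of Step~2. On $[0,\pi]$ only the angle $T$ carries an atom, namely the top segment of length $L$. The faces at angles $0$ and $\pi$ are the single points $A(0)=(1,0)$ and $C(T)$.

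To see this, note that $\max\{q_2: q\in K_0,\ q_1=1\}=h_{K_0}'(0^+)=\alpha(0)+\ip{x(0)}{v_0}=0$. Since $K_0\subset\{q_2\ge0\}$, this face is $\{(1,0)\}$. The reflection \eqref{eq:pathsymmetry}, under which $K_0$ is invariant, gives the face at $\pi$.

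This is not cosmetic. The paper's proof states Baek's density requirement on the half-open arcs, as $\sigma_{K_0}|_{[0,T)}=r_{K_0}(t)\dd t$ and $\sigma_{K_0}|_{(T,\pi]}=s_{K_0}(t-T)\dd t$. It explicitly checks that no atom occurs at $0$ or $\pi$. With atoms there, as you assert, those identities would fail. So your Step~3 claim that Baek's hypotheses hold does not follow from Step~2 as written.

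Replace that sentence with the singleton argument above. The vanishing $\alpha(0)=0$ that you flagged as an obstacle in Step~1 is exactly what places $A(0)$ on the base and removes the endpoint atom.
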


\begin{proof}
The $C^1$ property follows from Lemma~\ref{lem:kraw22} and the redundant
matches.  The strict signs are certified phasewise.  By
Proposition~\ref{prop:cap} and Lemma~\ref{lem:support}, $A$ and $C$ are the
support-point curves of $K_0$.  The differential Gauss--Minkowski formula and
\eqref{eq:lambdas} therefore give
\[
 r_{K_0}(t)=\lambda_A(t)\ge0,
 \qquad
 s_{K_0}(t)=-\lambda_C(t)\ge0.
\]
Thus
\[
\sigma_{K_0}|_{[0,T)}=r_{K_0}(t)\dd t,
\qquad
\sigma_{K_0}|_{(T,\pi]}=s_{K_0}(t-T)\dd t.
\]
The top horizontal edge contributes only at the excluded angle $T$; the
support faces at angles $0$ and $\pi$ are singletons, so no endpoint atom
violates the two displayed density statements.  These are exactly the three
requirements of Baek's definition \cite{Baek}.
\end{proof}

\begin{proposition}[Area threshold]\label{prop:area}
One has
\[
 |K_0|>2.4206448441453>2.2.
\]
Consequently $K_0$ belongs to Baek's class $\mathcal K_i$.
\end{proposition}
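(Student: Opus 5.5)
The plan is to compute $|K_0|$ exactly in terms of the certified support data, using the support-function area formula. Then I will evaluate the resulting closed-form expression by exact interval arithmetic on the certified parameter boxes.

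By Proposition~\ref{prop:cap} and Lemma~\ref{lem:support}, the boundary of $K_0$ consists of four pieces, traversed counterclockwise:
\begin{itemize}
\item the base segment from $C(T)=(\kappa_{5,1}-a_1,0)$ to $A(0)=(1,0)$;
\item the support curve $A|_{[0,T]}$;
\item the top segment from $A(T)$ back to $C(0)$, of length $L$;
\item the support curve $C|_{[0,T]}$ down to $C(T)$.
\end{itemize}
Green's formula then gives
\[
|K_0|=\tfrac12\oint(q_1\dd q_2-q_2\dd q_1).
\]
Along the base, $q_2\equiv0$, so the base contributes nothing. The top segment contributes $\tfrac12 L$, since there $q_2=1$ and $\dd q_1$ integrates to $-L$. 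Along $A$ and $C$, equation \eqref{eq:lambdas} gives $A'=\lambda_A v_t$ and $C'=\lambda_C u_t$, so the integrand becomes
\[
\tfrac12\lambda_A\,\ip{A}{u_t}=\tfrac12\lambda_A h_{K_0}(t)
\quad\text{and}\quad
\tfrac12\lambda_C\,\ip{C}{v_t}\cdot(-1)\text{-type terms}.
\]
More cleanly, I will use the equivalent Minkowski form
\[
|K_0|=\tfrac12\int h\,\dd\sigma .
\]
Here $h_{K_0}(t)=\ip{x(t)}{u_t}+1$ and $h_{K_0}(t+T)=\ip{x(t)}{v_t}+1$ (Proposition~\ref{prop:cap}). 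The densities $r=\lambda_A$ and $s=-\lambda_C$ come from Proposition~\ref{prop:injectivity}. There is an atom of mass $L$ at angle $T$ with support value $1$, and a base atom of mass $|C(T)-A(0)|$ with support value $0$. This yields
\[
|K_0|=\tfrac12 L+\tfrac12\int_0^T\lambda_A(t)\bigl(\ip{x(t)}{u_t}+1\bigr)\dd t-\tfrac12\int_0^T\lambda_C(t)\bigl(\ip{x(t)}{v_t}+1\bigr)\dd t.
\]

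Next I will evaluate the two integrals phase by phase. On $I_1$ and $I_5$ one of $\lambda_A,\lambda_C$ vanishes and the other is constant. On $I_2$–$I_4$ the factors $\lambda$ are $\beta$, $\alpha$, or linear in $t$. In every phase, $\ip{x}{u_t}$ and $\ip{x}{v_t}$ are explicit combinations of polynomials in $t$, $\cos t$, $\sin t$ (using $x=R_tz_j+\kappa_j$). So each integrand is a trigonometric polynomial times a polynomial of degree at most $4$, and it has a closed-form antiderivative. The symmetry \eqref{eq:pathsymmetry} interchanges the $A$-integral on $I_j$ with the $C$-integral on $I_{6-j}$, so I only need to integrate on $I_1,I_2,I_3$ and double. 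As an independent check I will compute the same quantity via the shoelace-type integral $\tfrac12\oint(q_1\dd q_2-q_2\dd q_1)$.

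Finally I will substitute the endpoints $\phi,\theta,\eta,\tau$ and the parameters from the box $Z$ of Lemma~\ref{lem:kraw22}, with $\pi$ enclosed by Machin's identity. This gives a rational interval for $|K_0|$, whose lower endpoint I expect to exceed $2.4206448441453$. The bound $>2.2$ then places $K_0$ in $\mathcal K_i$ by Baek's definition, using injectivity from Proposition~\ref{prop:injectivity}.

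The main obstacle is bookkeeping, not analysis. The antiderivatives must be derived exactly and without sign or orientation errors, particularly the $C$ contributions, whose density is $-\lambda_C$ at angle $t+T$. The interval evaluation must also be tight despite cancellation among large terms. I will control the cancellation by evaluating the symbolic antiderivatives with high-precision Taylor enclosures, and I will cross-check against the Green's-formula computation.
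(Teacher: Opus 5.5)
Your Minkowski-formula setup is correct. With densities $\lambda_A$ and $-\lambda_C$, the top atom $L$ at support value $1$, and the base atom at support value $0$, your displayed expression for $|K_0|$ is right. Carried out on all five phases, it would prove the claim, since the true value is roughly $2.86$.

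\textbf{The error.} The symmetry shortcut you propose is false as stated. Write
\[
\mathcal A_j=\tfrac12\int_{I_j}\lambda_A h_{K_0}(t)\dd t,
\qquad
\mathcal C_j=-\tfrac12\int_{I_j}\lambda_C h_{K_0}(t+T)\dd t .
\]
The symmetry of $K_0$ is the reflection $q\mapsto(\ell-q_1,q_2)$ about the line $q_1=\ell/2$. This line does not pass through the origin $x(0)=0$, and the local contributions $\tfrac12 h\,\dd\sigma$ are not translation invariant. Using \eqref{eq:pathsymmetry}, $v_{T-t}=(-\cos t,\sin t)$ and $\lambda_C(T-t)=-\lambda_A(t)$, one finds $h_{K_0}(2T-t)=h_{K_0}(t)-\ell\cos t$. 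Since $A_2'=\lambda_A\cos t$ and $A_2$ runs from $0$ to $1$, this gives
\[
\mathcal C_{6-j}=\mathcal A_j-\frac{\ell}{2}\int_{I_j}\lambda_A\cos t\dd t,
\qquad
\sum_j\mathcal C_j=\sum_j\mathcal A_j-\frac{\ell}{2}.
\]
Hence $|K_0|=\tfrac{L}{2}+2\sum_j\mathcal A_j-\tfrac{\ell}{2}$, with $-\ell/2=(a_1-1-\kappa_{5,1})/2\approx0.614$.

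\textbf{Why it matters.} Naive doubling of the $A$-side gives $\tfrac{L}{2}+2\sum_j\mathcal A_j\approx2.25$, below the target $2.4206\ldots$. Doubling the $C$-side instead gives about $3.47$, which exceeds even the bounding-rectangle area $1+a_1-\kappa_{5,1}$. Separately, $I_3=[\theta,\eta]$ is its own mirror image, so it must not be doubled.

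Your Green's-formula ``independent check'' would not catch any of this. On each piece it is literally the same integrand: $\det(A,A')=\lambda_A\ip{A}{u_t}$ and $\det(C,C')=-\lambda_C\ip{C}{v_t}$. To fix the shortcut, either keep the correction term, recentre at $(\ell/2,0)$ before invoking symmetry, or integrate all five phases.

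\textbf{Comparison with the paper.} Even repaired, your route is far heavier than the paper's. The threshold $2.4206448441453$ is exactly $2a_1$. The four support points
\[
A(0)=(1,0),\quad C(T)=(\kappa_{5,1}-a_1,0),\quad A(T)=(a_1+\kappa_{5,1},1),\quad C(0)=(1-2a_1,1)
\]
lie in $K_0$ and span a height-one trapezoid. Its parallel sides are $1+a_1-\kappa_{5,1}$ and $L=3a_1+\kappa_{5,1}-1$, summing to $4a_1$. Convexity then gives $|K_0|\ge2a_1$, and the certified box for $a_1$ finishes the proof.

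That argument needs no boundary description of $K_0$, no surface-area measure, and no integration. Your approach yields the actual area of $K_0$, which is more than the proposition asks for. The cost is closed-form antiderivatives on five phases and reliance on the full surface-measure description from Proposition~\ref{prop:injectivity}. The final step to $\mathcal K_i$ is the same in both.
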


\begin{proof}
The four support points $A(0),C(0),A(T),C(T)$ lie in $K_0$.  Their convex hull
is a height-one trapezoid.  The endpoint formulas in the proof of
Lemma~\ref{lem:support} show that its two horizontal side lengths sum to
$4a_1$, so its area is exactly $2a_1$.  The certified box gives
\[
 a_1>1.210322422072688751416085718.
\]
Convexity therefore yields
\[
 |K_0|\ge2a_1
 >2.420644844145377502832171436>2.2.
\]
Together with Proposition~\ref{prop:injectivity}, this is the definition of
membership in $\mathcal K_i$.
\end{proof}

For completeness, the coarse comparisons used in Baek's Section 8.1 \cite{Baek} also
follow without a decimal root input.  The angle box gives $\phi<0.04$, so
\begin{align*}
\cos\phi&>1-0.04^2/2=0.9992,\\
2\sec\phi+2\tan\phi
&<\frac{2+0.08}{0.9992}<2.08208<2.2,\\
\sec\phi&<1.1.
\end{align*}

\section{Relation to Baek's optimality framework}

\begin{corollary}[Gerver-specific input to the variational framework]\label{cor:baek}
Within the variational framework of \cite{Baek}, and with the tangent
convention of Theorem~\ref{thm:rays}, the certified Gerver cap satisfies the
geometric hypotheses used in Baek's Theorems 8.4.3--8.5.7.  For the admissible
competitor triples considered there,
\[
D\mathcal Q(K_0,B_{K_0},D_{K_0};K^*,B^*,D^*)\le0.
\]
\end{corollary}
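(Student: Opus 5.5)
The corollary is a matching statement. Baek's Section~8 derives $D\mathcal Q\le0$ conditionally, from a finite list of Gerver-specific geometric assertions. The plan is to go through Theorems 8.4.3--8.5.7 of \cite{Baek}, list every hypothesis they use about the Gerver cap, and pair each one with a result already certified above. Baek's own variational computation is then invoked unchanged. We make no independent claim about the sign of the directional derivative beyond what his theorems yield once their hypotheses hold.

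\textbf{Step 1: class membership and the cap--niche structure.} We first record that $K_0\in\mathcal K_i$, by Propositions~\ref{prop:injectivity} and~\ref{prop:area}. Next, $K_0$ is the cap associated with $G$, its supporting hallways are $L_t=x(t)+R_tL$, and its inner corner is exactly $x$; these come from Proposition~\ref{prop:cap} and Lemma~\ref{lem:hallway}. Proposition~\ref{prop:gerver} identifies $G$ with Gerver's sofa.

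\textbf{Step 2: the niche and contact hypotheses.} Baek's niche description (boundary pieces $B|_{[\eta,T]}$, $x|_{[\phi,\tau]}$, $D|_{[0,\theta]}$, no extra components) is Theorem~\ref{thm:niche}. The strict inner-wall separation away from the contact times is Lemma~\ref{lem:critical}, supplemented by Lemma~\ref{lem:core}. The ray conditions $B(t)-x(t)\in\R_{\ge0}(-v_t)$ and $D(t)-x(t)\in\R_{\ge0}(-u_t)$ are in Theorem~\ref{thm:rays}. The sign conditions $\alpha<0<\beta$ and the nonnegative support densities $\lambda_A\ge0\ge\lambda_C$ come from Lemma~\ref{lem:support} and Proposition~\ref{prop:injectivity}. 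The numerical comparisons of Baek's Section~8.1 are the coarse bounds on $\phi$ displayed after Proposition~\ref{prop:area}.

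\textbf{Step 3 (main obstacle): the tangent directions of $B$ at $\tau$ and $D$ at $\phi$.} Baek's first-variation formulas integrate along the boundary curves $B$ and $D$ using their tangent vectors. At these two junctions the $t$-parametrization has unequal one-sided speeds, so a classical derivative in $t$ is not available there. The plan is to check that every place Baek uses $B'$ or $D'$ enters in one of two ways:
\begin{itemize}
\item through the oriented unit tangent, which Theorem~\ref{thm:rays} shows is well defined, since the one-sided vectors are positively collinear;
\item inside an integral over the arc, which is unaffected by a single-point discontinuity of speed, or is invariant under the monotone $C^1$ reparametrization supplied there.
\end{itemize}
If some step of Baek's argument genuinely needs the speed at the junction, we would rewrite that integral in arc length, where both the curve and its unit tangent are $C^1$. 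We would then verify that the resulting expression coincides with Baek's.

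With every hypothesis matched, Theorems 8.4.3--8.5.7 of \cite{Baek} apply to $(K_0,B_{K_0},D_{K_0})$ and give $D\mathcal Q\le0$ for each admissible competitor triple.
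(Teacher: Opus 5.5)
Your Steps~1--2 match Baek's hypotheses to the same results the paper cites: Propositions~\ref{prop:cap}, \ref{prop:gerver}, \ref{prop:injectivity}--\ref{prop:area}, and Theorems~\ref{thm:niche} and~\ref{thm:rays}. Your Step~3 agrees with the tangent convention of Theorem~\ref{thm:rays}, which the paper simply adopts. The gap is your decision to invoke Baek's computation ``unchanged'' and to make ``no independent claim about the sign.'' The paper does not do this, and it cannot, because Remark~\ref{rem:typos} records three slips in version~1 of Baek's text:
\begin{enumerate}
\item In the proof of Theorem~8.5.7, the sums in cases~(3) and~(6) are called \emph{nonnegative}, which is the wrong sign for $D\mathcal Q\le0$.
\item Case~(8) cites $J_1$ where $J_{10}$ is required.
\item The $B$-side identity in Theorem~8.4.3(2) is printed with $D(t_3)$ instead of $B(t_3)$.
\end{enumerate}
A literal black-box use of Baek's argument therefore passes through a step asserting the opposite of the needed sign. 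A literal hypothesis-by-hypothesis matching would also have to correct Theorem~8.4.3(2) before anything certified could be paired with it.

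The paper closes this by supplying the sign itself. After Baek's measure decomposition, the two mixed-interval contributions have the form
\[
\int\bigl[(h_{K^*}-h_{K_0})+(h_{\breve B^*}-h_{\breve B})\bigr]\dd\breve\sigma_B .
\]
This integral is non-positive because $h_{K^*}+h_{\breve B^*}\le1=h_{K_0}+h_{\breve B}$. The inequality holds for the admissible competitor and the equality for the Gerver triple. The $D$-side contribution is identical, and only the remaining terms are left to cancel as in Baek's calculation. Add this estimate and read Theorems~8.4.3 and~8.5.7 with the three corrections above; your outline then becomes the paper's proof. Without it, $D\mathcal Q\le0$ rests on a printed argument that does not establish the required sign.
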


\begin{proof}
Theorem~\ref{thm:admissible} and Proposition~\ref{prop:gerver} independently
identify the Gerver moving sofa and its motion.  Proposition~\ref{prop:cap},
Theorem~\ref{thm:niche}, Theorem~\ref{thm:rays}, and
Propositions~\ref{prop:injectivity}--\ref{prop:area} supply the cap, niche,
contact, tangent, injectivity, and area hypotheses.  After Baek's measure
decomposition, the two mixed-interval contributions have the form
\[
\int\bigl[(h_{K^*}-h_{K_0})+(h_{\breve B^*}-h_{\breve B})\bigr]
\dd\breve\sigma_B\le0,
\]
because $h_{K^*}+h_{\breve B^*}\le1=h_{K_0}+h_{\breve B}$; the $D$
contribution is identical, and the remaining terms cancel as in Baek's
calculation.
\end{proof}

\begin{remark}[Conventions used for the cited preprint]\label{rem:typos}
Three apparent typographical slips in version~1 of \cite{Baek} are relevant to
this application.  In the proof of Theorem 8.5.7, the sums in cases (3) and
(6) are described as ``nonnegative'', although the displayed support
inequalities and the stated conclusion require ``non-positive''.  Case (8)
refers to $J_1$, whereas the table and Theorem 8.4.5 require $J_{10}$.  In
Theorem 8.4.3(2), the identity on the $B$ side contains $D(t_3)$; the adjacent
formulas, domains, and reflection symmetry require $B(t_3)$.  These readings
are forced by the surrounding formulas and introduce no additional analytic
assumption.
\end{remark}

\section{Reproducibility}

The complete verifier underlying the computer-assisted claims is supplied in
the ancillary files accompanying this preprint.  It requires Python 3.10 or later and no third-party
package.  From the directory containing the extracted files, run
\begin{center}
\texttt{python3 gerver\_interval\_verifier.py}.
\end{center}
A successful run exits with status $0$ and ends with
\texttt{ALL CERTIFICATES PASSED}.  Every decimal in the program is parsed as
an exact rational number.  Trigonometric enclosures use alternating-series
bounds and outward rational rounding; all determinants, interval Jacobians,
Krawczyk operators, grid values, and inequality decisions use exact
integer/rational arithmetic.  The verifier is released under the MIT License.
Its SHA-256 digest is
\begin{center}
\footnotesize\ttfamily
e53dd6cd66f3e0f723562e73547828cfcd1bbfe12c3a474c1f70c986e8c975b7.
\end{center}
\appendix
\section{Certificate data}\label{sec:certificate}

\subsection{Four-dimensional Krawczyk data}\label{app:kraw4}

The certified interval for $\pi$ is
\begin{align*}
\pi\in[&3.14159265358979323846264338327950288419716939937510,\\[-1mm]
       &3.14159265358979323846264338327950288419716939937512].
\end{align*}
The exact rational preconditioner $C_4$ has the following entries.
\begin{center}
\scriptsize
\begin{tabular}{@{}ccl@{}}
\toprule
$i$&$j$&$(C_4)_{ij}$\\ \midrule
1&1&$-0.6807128249614081174405751646233895883477$\\
1&2&$-0.3337290596509797647513745371803179825998$\\
1&3&$\phantom{-}0.5706218765463239669856350848043326611240$\\
1&4&$\phantom{-}0.6979196385375907434754729126542237618887$\\
2&1&$-0.4666038926009397052505509057353748800132$\\
2&2&$\phantom{-}0.1610673343535178406555067575795610603875$\\
2&3&$\phantom{-}1.3547032126936277881654062923572758586800$\\
2&4&$-1.3383963397818501192065762828949087754660$\\
3&1&$-0.2734456716994644402025329684152137828373$\\
3&2&$-0.1403681990654621240195858704978866496160$\\
3&3&$-0.1994824266757564622149327069111026328384$\\
3&4&$\phantom{-}0.3097544571982855798592484554650267326733$\\
4&1&$\phantom{-}0.5937522534788503855395693041282403727707$\\
4&2&$-2.5788468435618635769290789544255893996458$\\
4&3&$-1.7323120434609915863966970669519948445066$\\
4&4&$\phantom{-}5.1750746732148285574274925461942534764817$\\
\bottomrule
\end{tabular}
\end{center}
Exact evaluation gives
\begin{center}
\footnotesize
\begin{tabular}{@{}cl@{}}
\toprule
coordinate&$\Kraw_4(X)$\\ \midrule
$a$&$[0.094426560843652890184225493397610063860203012038059606997701,$\\[-1mm]
&$\phantom{[}0.094426560843652890184225494578332546207118702903871857290584]$\\[1mm]
$b$&$[1.399203727333547214607829643729505063776427666573761257673861,$\\[-1mm]
&$\phantom{[}1.399203727333547214607829644861740116356827337146680585118107]$\\[1mm]
$\phi$&$[0.039177364790083641863217874986854307793773419984267097720804,$\\[-1mm]
&$\phantom{[}0.039177364790083641863217875493772628057672479551519692983078]$\\[1mm]
$\theta$&$[0.681301509382724894473855754545571846532793483993865313869961,$\\[-1mm]
&$\phantom{[}0.681301509382724894473855759650959452894906792967102598927920]$\\
\bottomrule
\end{tabular}
\end{center}
Each row is strictly contained in its corresponding input interval.

\subsection{The direct-system box}\label{app:Y}

The twenty-dimensional parameter box $Y$ is as follows.
\begin{center}
\scriptsize
\begin{tabular}{@{}cl@{}}
\toprule
parameter&certified interval\\ \midrule
$\kappa_{1,1}$&$[-0.21032242207268875141628571849,-0.21032242207268875141608571849]$\\
$\kappa_{1,2}$&$[0.2499999999999999999999,0.2500000000000000000001]$\\
$\kappa_{2,1}$&$[-0.91917929277159332227479610289,-0.91917929277159332227459610289]$\\
$\kappa_{2,2}$&$[0.472406619750805465181660762512,0.472406619750805465181860762512]$\\
$\kappa_{3,1}$&$[-0.61376322943025166855501429132,-0.61376322943025166855481429132]$\\
$\kappa_{3,2}$&$[0.88962647900322186072694305005,0.88962647900322186072714305005]$\\
$\kappa_{4,1}$&$[-0.30834716608891001483523247974,-0.30834716608891001483503247974]$\\
$\kappa_{4,2}$&$[0.472406619750805465181660762512,0.472406619750805465181860762512]$\\
$\kappa_{5,1}$&$[-1.01720403678781458569374286415,-1.01720403678781458569354286415]$\\
$\kappa_{5,2}$&$[0.2499999999999999999999,0.2500000000000000000001]$\\[1mm]
$a_1$&$[1.2103224220726887514160857185,1.2103224220726887514162857185]$\\
$a_2$&$[-0.2500000000000000000001,-0.2499999999999999999999]$\\
$b_1$&$[-0.52762459802678462416060380937,-0.52762459802678462416040380937]$\\
$b_2$&$[0.92025838516063762289360579501,0.92025838516063762289380579501]$\\
$c_1$&$[0.626045522848465867552229310386,0.626045522848465867552429310386]$\\
$c_2$&$[-0.94475080394643075167909238125,-0.94475080394643075167889238125]$\\
$d_1$&$[1.3130227614242329337760646552,1.3130227614242329337762646552]$\\
$d_2$&$[-0.525382670414554437202936294305,-0.525382670414554437202736294305]$\\
$e_1$&$[1.2103224220726887514160857185,1.2103224220726887514162857185]$\\
$e_2$&$[0.2499999999999999999999,0.2500000000000000000001]$\\
\bottomrule
\end{tabular}
\end{center}
The complete $C_{22}$ matrix and the exact Krawczyk image are contained in the
hashed companion verifier.

\subsection{Selected certified margins}\label{app:margins}

\begin{center}
\small
\begin{tabularx}{\textwidth}{@{}>{\raggedright\arraybackslash}p{0.39\textwidth}X@{}}
\toprule
quantity&certified information\\ \midrule
$22$D output/input radius ratio&$<0.0095$\\
$h(T/2)$&$[0.0012381960467469534857,\;0.0012381960467469534871]$\\
$h(\theta)$&$>0.0041881915317373$\\
$D'\!\cdot u$ at $t=\theta$&$>0.1317246472818$\\
one-sided coefficient at $t=\phi$&$0.4527867195780<\delta_\phi<0.4527867195784$\\
early-tail wall margin&$\ip{x'(q)}{u_\eta}>0.953$ on $[0,\phi]$\\
endpoint support separation $L$&$>1.6137632294302516$\\
lattice $G_u,G_v$&$>0.335732186461$\\
path bounds&$\|x\|_1<3.061587545800$, $\|x'\|_1<3.644789920941$\\
Lipschitz loss&$<0.164599267045$\\
continuum $G_u,G_v$&$>0.171132919416$\\
trapezoid area&$>2.4206448441453775$\\
\bottomrule
\end{tabularx}
\end{center}

\section*{Data and code availability}

No external research dataset is used.  All numerical inputs, interval boxes,
certificate data, and source code needed to reproduce the computer-assisted
findings are contained in this article and the accompanying ancillary files.

\section*{Author contributions}

The sole author conceived the study, developed the proofs and verifier,
performed the certification and audits, and wrote and approved the manuscript.

\section*{Funding}

The author received no specific grant from any funding agency in the public,
commercial, or not-for-profit sectors for this research.

\section*{Competing interests}

The author declares no competing interests.

\section*{AI-assisted research disclosure}

OpenAI ChatGPT was used as an AI-assisted research tool during proof
exploration, checking of symbolic derivations, development and adversarial
testing of the interval verifier, and drafting and revision of the manuscript.
The final verification and manuscript revision used the GPT-5.6 Pro model
(accessed August 2026).  The author independently reviewed and verified the
originality and accuracy of all mathematical claims, source code, and
references, and assumes full responsibility for the integrity of the article.
The AI system is not an author.

\end{document}